\definecolor{orange}{rgb}{1,0.5,0}
\newtheorem{thm}{Theorem}
\newtheorem{lemma}[thm]{Lemma}
\theoremstyle{definition}
\newtheorem*{defn}{Definition}
\title{A graph theoretical Poincar\'e-Hopf Theorem}
\author{Oliver Knill}
\date{Jan 4, 2012}
\address{
        Department of Mathematics \\
        Harvard University \\
        Cambridge, MA, 02138
        }
\subjclass{Primary:   05C10, 57M15 }
\keywords{Euler characteristic, graph theory, index, Morse, Poincare-Hopf}
\begin{document}
\maketitle

\begin{abstract}
We introduce the index $i(v) = 1 - \chi(S^-(v))$ for critical points 
of a locally injective function $f$ on the vertex set $V$ of a simple graph $G=(V,E)$. 
Here $S^-(v) = \{ w \in S(v) \; | \; f(w)-f(v)<0 \; \}$ is the subgraph of the 
unit sphere $S(v)$ generated by the vertices $\{ w \in V \; | \;  (v,w) \in E \; \}$. 
We prove that the sum $\sum_{v \in V} i(v)$ of the indices always is equal to the 
Euler characteristic $\chi(G)$ of the graph $G$. This is a discrete Poincar\'e-Hopf theorem
in a discrete Morse setting. 
\end{abstract}

\section{Introduction}

The classical Poincar\'e-Hopf theorem $\chi(M) = \sum_{x} {\rm ind}(x)$ 
for a vector field $F$ on a manifold $M$ plays an important role in differential topology.
Developed first in two dimensions by Poincar\'e \cite{poincare85} (chapter VIII) and extended 
by Hopf in arbitrary dimensions \cite{hopf26}, it is today pivotal in the proof of Gauss-Bonnet theorems
for smooth Riemannian manifolds (i.e. \cite{Guillemin,Spivak1999,Hirsch,Henle1994,docarmo94,BergerGostiaux}. 
The Poincar\'e-Hopf formula has practical value because it provides a fast way to compute the Euler 
characteristic $\chi(M)$ of a compact manifold; it is only necessary to realize a 
concrete vector field, isolate the stationary points and determine their indices. 
This establishes quickly for example that
$\chi(S_{n-1}) = 1-(-1)^n$ because odd dimensional spheres allow vector fields without critical points 
and even dimensional spheres have gradient fields $f$ which have just maxima and minima.
When searching for an analogue theorem for graphs $G=(V,E)$, one is tempted to look for a result which applies
for discrete analogues of vector fields on graphs. But already simple examples like the triangle $G=K_3$ are 
discouraging: a vector field on the edges which circles around the triangle has no critical point even 
so $\chi(G)=3-3+1=1$. 

\begin{figure}
\scalebox{0.30}{\includegraphics{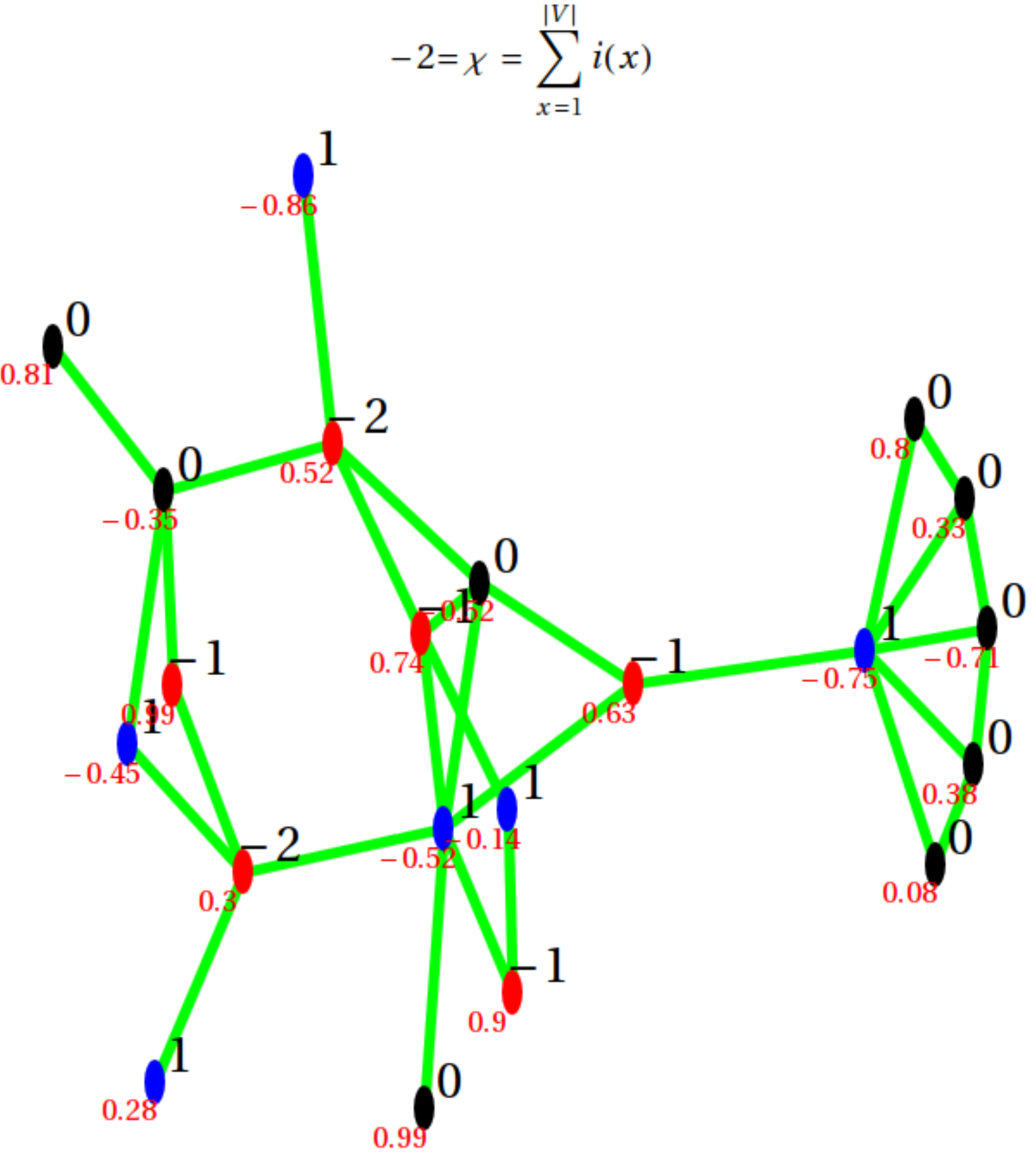}} 
\caption{
The picture shows a graph $G=(V,E)$ with order $|V|=21$ and size $|E|=31$. 
The values of a random Morse function $f$ as well as the 
indices $i_f(v)$ at each vertex $v$ are seen. 
The indices add up to the Euler characteristic, which is in this case $-2$. 
Since there are no tetrahedra $K_4$ as subgraphs, the Euler characteristic is 
$\chi(G) = v-e+f$ where $f$ is the number of triangles, $8$ in this case. 
The nonzero indices in this particular case are $1,1,-1,-2,1,-2,-1,1,1,-1,-1,1$
which add up to $-2$. The vertex on the top is the place where this particular $f$
has attained its maximum. The index is $1$. 
}
\label{illustration}
\end{figure}

Since the vector field encircles a critical point located on the face of the triangle one would be forced to attach 
indices to faces of the graph. For classes of graphs like triangularizations of manifolds, one can indeed proceed like this
and formulate a discrete analogue but it would not be much different from the classical case. 
Gradient fields avoid the contribution of faces. It is Morse theory which allows to avoid vector fields and 
deal with functions alone. We look at a discretization of indices at each of the finitely many 
non-degenerate critical points of $f$. The gradient vector fields are enough for Poincar\'e-Hopf and easier to
discretize because if the curl is zero, one avoids having to deal indices on the faces. 
Morse theory leads to more than Euler characteristic alone. It builds ties
with homology through the Morse inequalities which give bounds for the Betti numbers. Solving a variational problem among
all Morse functions allows to compute Betti numbers and verify the Euler-Poincar\'e formula $\chi(G)=\chi'(G)$
where $\chi(G) = v_0-v_1+v_2+ \cdots$ and $\chi'(G) = b_0-b_1+b_2 + \cdots$. 
The symmetry $f \to -f$ establishes Poincar\'e duality for geometric graphs but the later can not hold for a general graph. 
For a triangle $G=K_3$ for example, where the Betti numbers are $b_0=1,b_1=0,b_2=0$ and the
Euler characteristic is $\chi(G) = b_0-b_1+b_2=v-e+f=3-3+1=1$, Poincar\'e duality does not hold.
Duality holds for geometric graphs including graphs whose local dimension is the same at
all points and for which unit spheres resemble usual spheres. The triangle $K_3$ is a two-dimensional graph but 
every point is a boundary point because the unit sphere is $K_2$ and not a one dimensional circular graph. In higher
dimensions, graphs are geometric if all vertices have the same dimension and every unit sphere is a sphere type graph. The smallest
two dimensional one is the octahedron, where $b_0=b_2=1,b_1=0$ and $\chi(G) = b_0-b_1+b_2=v-e+f=6-12+8=2$.
We will see here that as in the continuum, we can compute $\chi(G)$ also by adding up indices of gradient fields 
of Morse functions. This is most convenient since counting cliques is NP hard and computing cohomology groups 
or rather the dimensions of kernels of huge matrices of total rank $2^{|V|}$ 
- Hodge theory reduces to linear algebra on graphs - is rather tedious. \\

To motivate the definition of the index in the discrete, look at two-dimensional graphs, 
where each unit sphere is a circular graph. At local maxima or minima, $f(y)-f(x)$ has a definite sign on the circle
$S_r(x)$ and the index is $1$. At a saddle point, there are two components $S_r^{\pm}(x)$ of $S_r(x)$, where 
$f(y)-f(x)$ is positive or negative. This goes over to the discrete: in two dimensions, one can define 
$i(x) = 1-s(x)/2$, where $s(x)$ counts the number of sign changes of $f$ on the unit circle. 
Experiments in higher dimensions show that looking at the number of components of $S^{\pm}$ does not work 
any more. It is the Euler characteristic of the 
{\bf exit set} $S^-(v) = \{ y \in S(v) \; | f(y)-f(x)<0 \; \}$ which will matter, 
where we understand the Euler characteristic of the empty set to be $0$. 
We define therefore the index of $f$ at a vertex $v$ as 
$$    i(v) = 1-\chi(S^-(v)) \; . $$  
It will lead to the discrete Poincar\'e-Hopf formula
$$  \sum_{v \in V} i(v) = \chi(G) \; . $$
It speeds up the computation of $\chi(G)$ even more dramatically than 
Gauss-Bonnet 
$$  \sum_{v \in V} K(v) = \chi(G) $$ 
for a local curvature $K(v)$, a relation which also holds for general finite simple graphs. 
By choosing Morse functions in a smart way assigning high values to high degree vertices
and using the method inductively also for the computation of the indices which 
involves Euler characteristic, it allows us to compute $\chi(G)$ in polynomially 
many steps for most graphs, even so counting cliques $K_n$ in $G$ is known to be NP hard. 
As far as we know, it is unknown until now whether Euler characteristic of graphs can be 
computed in polynomial time. While graphs of high dimension and high degree close
to complete graphs could render $\chi(G)$ outside $P$ it is likely that a polynomial $p$
exists such that for most graphs in an Erd\"os-R\'enyi probability space \cite{bollobas}, 
$\chi(G)$ can be computed in polynomial time using the Poincar\'e-Hopf formula.

\section{Morse functions on graphs}

\begin{defn}
A scalar function on a graph $G=(V,E)$ is a function on the vertex set $V$.
The {\bf gradient} of a scalar function at a vertex $x$ is a function on the unit sphere 
$S(x)$ which assigns to a point $y \in S(x)$ with $(x,y) \in E$ the value $f(y)-f(x)$. 
We could write $D_yf(x) = \nabla f(x)(y) = f(y)-f(x)$ to get notation as in the continuum. 
\end{defn}

{\bf Remarks.} \\
1) The analogue in the continuum is the {\bf directional derivative} $D_vf(x) = \nabla f(x) \cdot v$ which is for fixed $x$
a function on the unit sphere of a point and which encodes all information about the gradient. 
Analogously to the continuum, where a differential $1$-form $\alpha$ is a linear function on the tangent space of a
point $x$, the gradient is determined by evaluating it on vectors of length $1$, the unit sphere, 
we think of the gradient of a graph as a scalar function on the unit sphere $S_1(x)$ of a vertex $x$. \\

\begin{defn}
A scalar function $f$ on a graph $(V,E)$ is called
a {\bf Morse function} if $f$ is injective on each unit ball 
$B(p) = \{ p \; \} \cup S(p)$ of the vertex $p$.
\end{defn}

{\bf Remarks.} \\
1) Any function which is injective on the entire vertex set $V$ is of course a Morse function. 
On the complete graph $K_n$, Morse functions agree with the injective functions. \\
2) Morse proved in 1931 \cite{Morse31}, 
that Morse functions are dense in $C^{\infty}(M)$. (Morse deals with analytic functions but they are dense)
The analogue statement is trivial for graphs because the set of all real valued functions on the 
finite vertex set $V$ is a $|V|$-dimensional vector space and injective functions are the complement of 
$\bigcup_{x \neq y} \{f \; | \;  f(x)=f(y) \; \}$, which is a finite union of sets of 
smaller dimensions.  \\
3) A convenient way to chose a Morse function is to enumerate the vertices 
$v_1,v_2, \dots$ and define $f(v_k)= k$.  \\
4) It would also not be enough to assume that $\nabla f$ is not zero at every point because we will 
use later that $f$ restricted to unit spheres are Morse functions as well. 

\begin{defn}
A vertex $x$ of a graph $G=(V,E)$ is called a {\bf critical point} of 
$f$, if the subgraph $S^-(x) = \{ f(p)<f(x) \cap S(x) \; \}$ has Euler characteristic 
different from $1$.
\end{defn}

{\bf Remarks:}  \\
1) Obviously, we could also formulate critical points with the exit set $S^+$ or define to be the condition that 
$\chi(S^+) + \chi(S^-) \neq 2$. We prefer to keep to keep the simpler condition and settle on one side. 
We take $S^-$ to match the continuum case, where $k$ stable directions leads to $S^-(x)=S_k$
and $\chi(S^-(x)) = 1-(-1)^k$ so that $i(x) = (-1)^k$ matches the index of nondegenerate critical points
in the continuum. \\ 
2) This definition works also in the continuum: a point $p$ is a critical point of a
smooth function $f(x,y)$ of two variables if for sufficiently small $r>0$, the 
partition of the spheres $S_r(p)$ into $S_r(p)^-=\{ f(y)-f(x) < 0 \; \}$ 
and $S_r(p)^+=\{ f(y)-f(x) > 0 \; \}$ has the property that both sets are disconnected or
one set is the entire sphere and one is empty. 
This means $i(x)=-1$. The notion $\nabla f(x) =\{ 0 \; \}$ is not a good definition in the discrete
because the local injectivity assumption would prevent that it is ever satisfied for Morse functions. \\
3) For one dimensional graphs, both with or without boundary a point is a critical point if it 
is not a local maximum or local minimum but if just one neighbor is in $S^+$ or just one neighbor is in $S^-$. 
End points are critical points if they are maxima. \\
4) In two dimensions, a point is a critical point if the number $s(x)$ of sign changes of
$\nabla f$ along the circle $S(x)$ is different from $2$. If there are $2$ sign changes, then $S^-(x)$
and $S^+(x)$ are both simply connected one dimensional graphs and have Euler characteristic $1$.  \\
5) For a random function $f$ on a random graph there is a substantial fraction of vertices which are
critical points. Using Monte-Carlo experiments, we measure that the expectation of having a critical point 
when summing over all possible graphs of a given vertex set and all possible Morse functions appears $1/2$. 
This means that for a typical graph and typical Morse function, we expect half the vertices to be 
critical points. 

\section{The index}

\begin{defn}
The {\bf index} of a graph is defined as $i(v)=1-\chi(S^-(v))$, where 
$S^-(v)$ is the subgraph of the unit sphere graph $S(v)$ 
generated by $\{ w \in S(v) \; | \; f(w)-f(v)<0 \; \}$. 
In analogy to the continuum we will refer to $S^-(x)$ also as the {\bf exit set}.
\end{defn}

{\bf Remarks:} \\
1) We could also look at the index $i^+(v)=1-\chi(S_1^+(v))$ or
$j(x) = (i^+(x)+i^-(x))/2 = 1-\chi(S^-(x))/2 - \chi(S^+(x)/2$. Of course, the positive index of $-f$ is the
negative index of $f$. We stick to the simpler index which is computationally half as expensive and which 
matches the continuum, where minima have index $1$ and maxima $(-1)^d$ with dimension $d$. If a graph 
has unit spheres of Euler characteristic $1-(-1)^{d}$ then maxima have index $(-1)^d$. \\
2) The symmetric index $j$ is zero everywhere on circular graphs and for graphs with boundary, 
it is $1/2$ at each end. \\
3) In the classical case, vector fields can have any integer index as the cases
$F(x,y) = {{\rm Re}(x+iy)^n, \pm {\rm Im}(x+i y)^n) }$ illustrate.
Also in the graph case, indices can become arbitrary large. \\
4) For a two-dimensional graph, the index is $i=1-s/2$, where $s$ is the number of 
sign changes of $\nabla f$ on the circle $S_1(x)$. 
This is analogue to the continuum \cite{henle1994} where $i = 1-(h-e)/2$ where $h$ is the number of 
"hyperbolic sectors" and $e$ the number of "elliptic sectors". \\
5) In general, if $f$ has a local minimum $x$, then the index is $1$ because the exit set is empty there.  \\
6) For complete graphs $K_n$, we have for a maximum $0$ for a minimum $1$ as well as for 
any other case. Also for the wheel graph, we can have Morse functions which just have one 
critical point at the center. \\
7) Look at a one dimensional graph with boundary. We have at a minimum $1$ and at a maximum $-1$.
On a minimum at the boundary we have $1$. \\
8) For a tree, the index at a vertex $v$ is $1$ minus the number of neighbors $w$ , where $f(w)$ 
is smaller than $f(v)$. \\
9) The index at a vertex $v$ is bounded below by $1-{\rm deg}(v)$ and bound above by $1+|a|$,
where $a$ is the minimal Euler characteristic of a graph with ${\rm deg}(v)$ vertices. \\
10) The definition works also in the continuum and does not refer to degree or homology:
For a smooth gradient field $F(x) = \nabla f$ in $R^n$ at a critical point $P$, the index is
$1-\chi(S^-(x))$, where $S^-(x)$ is the exit set on a sufficiently small sphere $S_r(x)$. 
This is illustrated in Figure\ref{2dillustration}. 

\begin{figure}
\scalebox{0.14}{\includegraphics{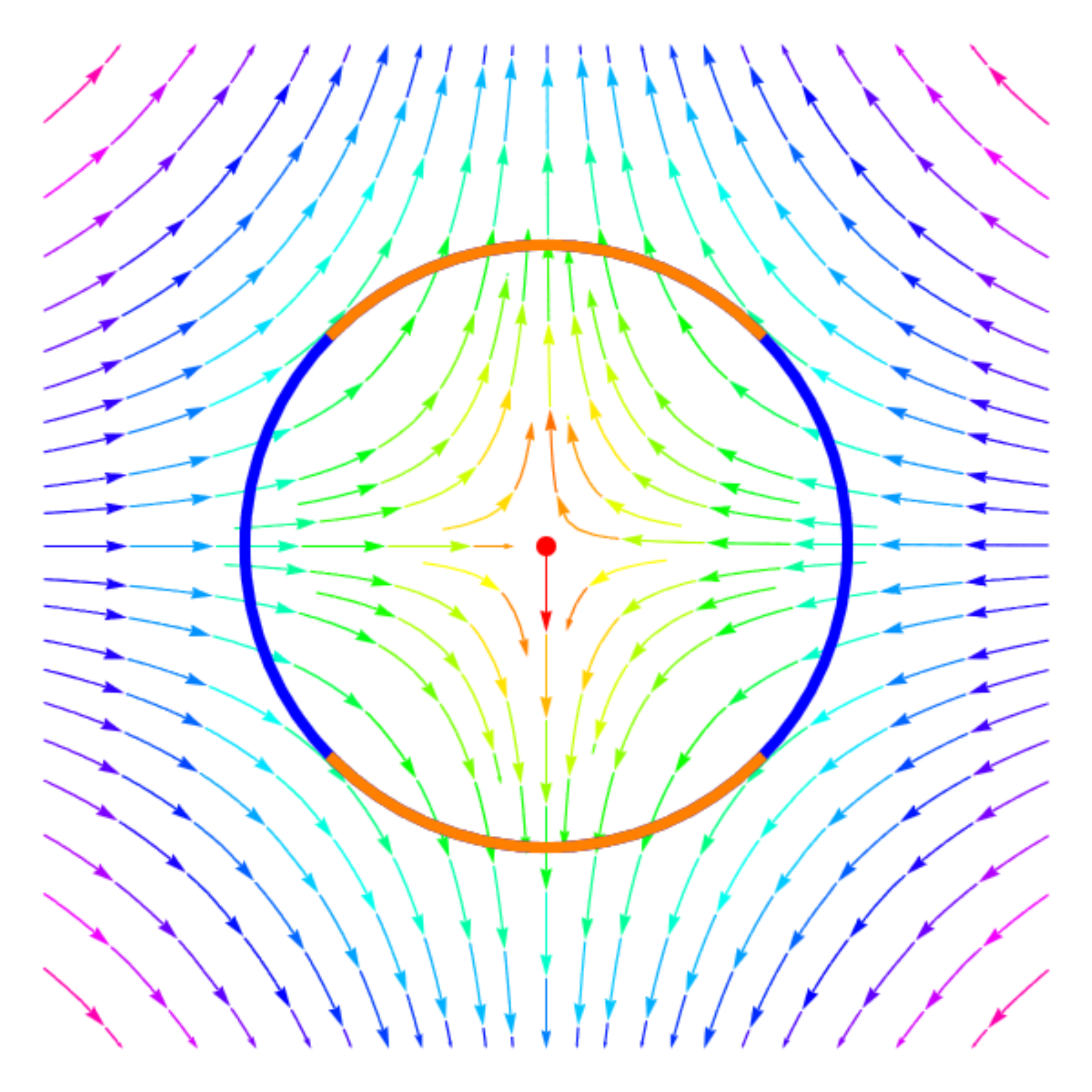}}
\scalebox{0.14}{\includegraphics{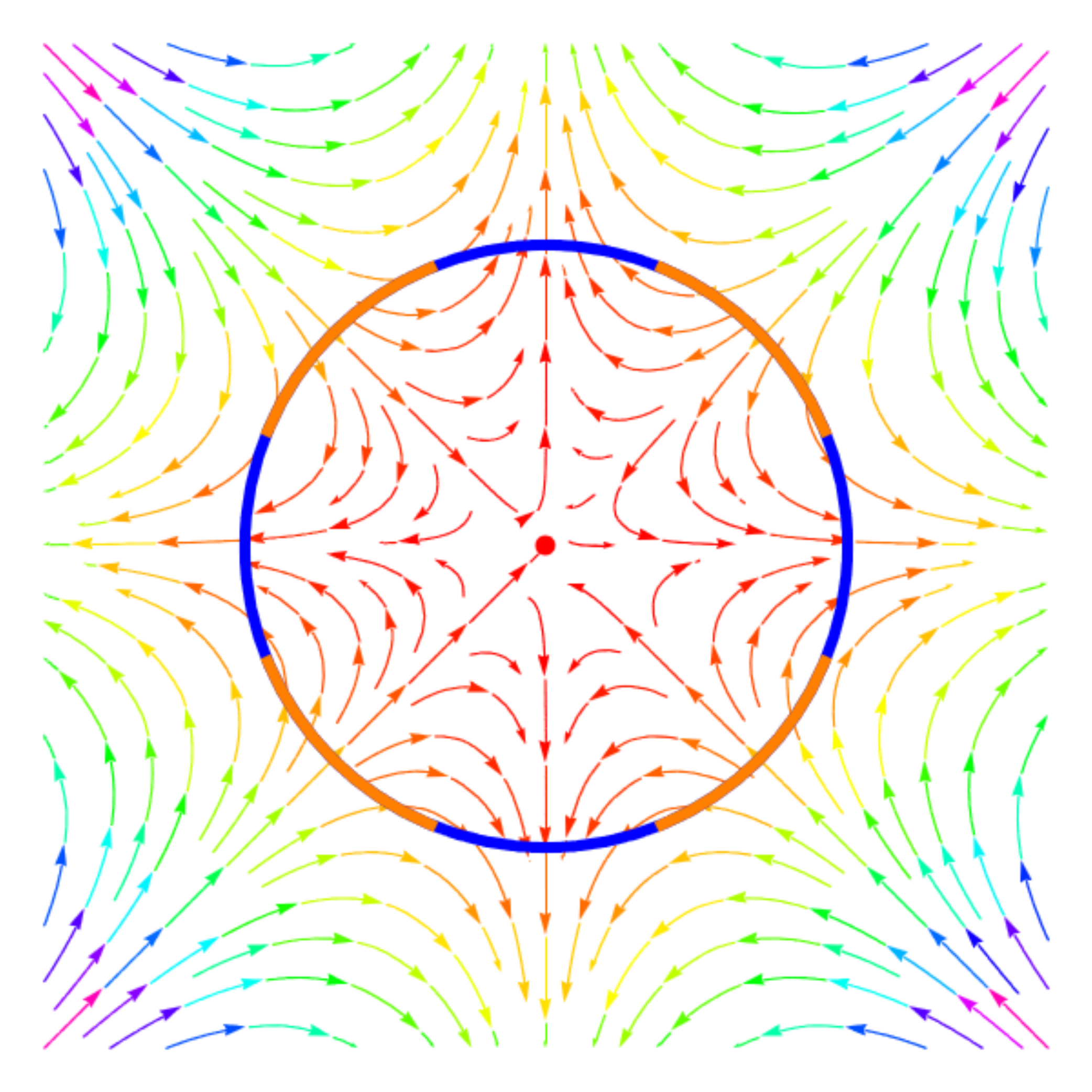}}
\scalebox{0.14}{\includegraphics{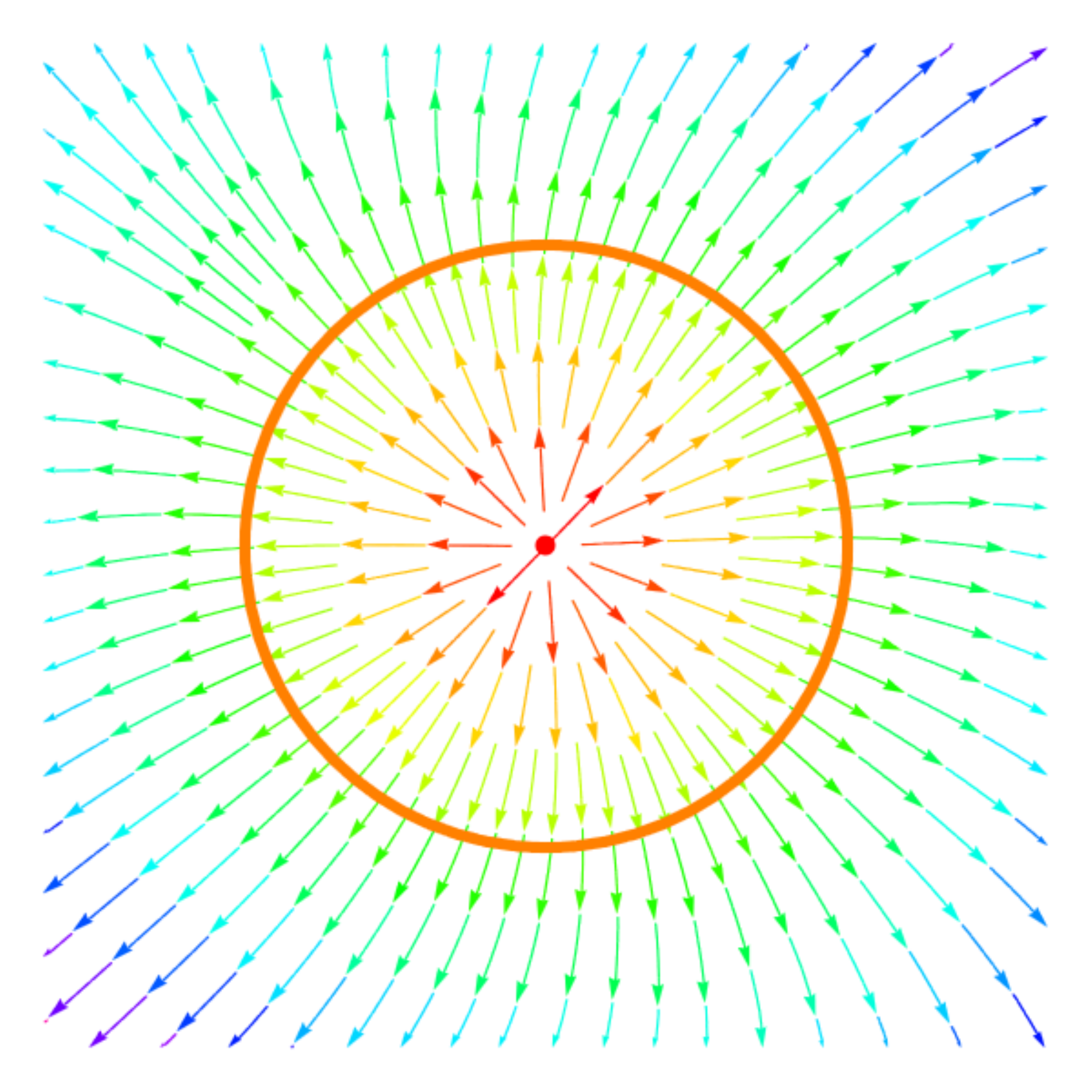}}
\scalebox{0.14}{\includegraphics{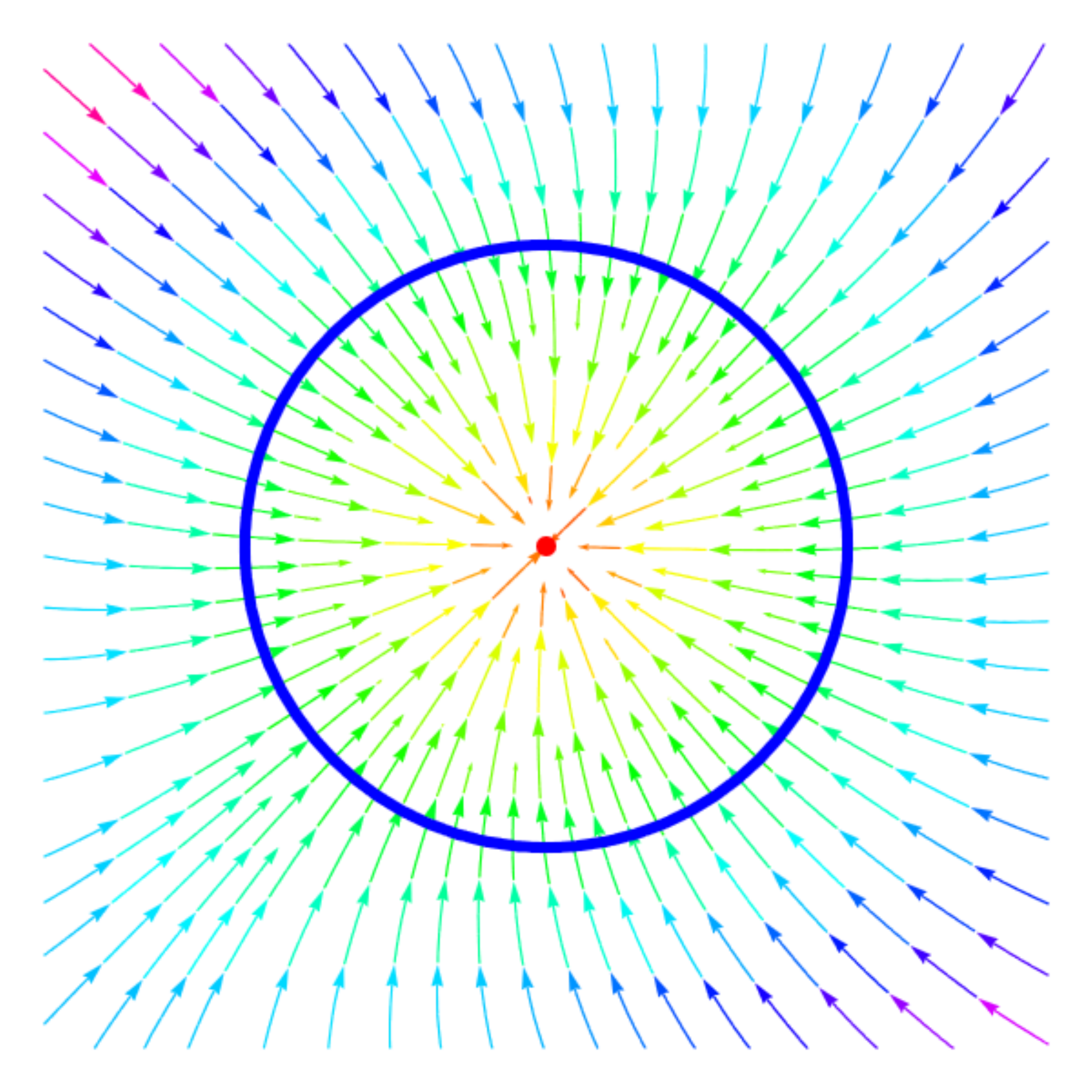}}
\caption{
The exit set $S^-(x) \subset S_r(x)$ is shown for various classical vector fields in the plane
near an equilibrium point. The index of the vector field is $1-\chi(S^-(x))$ if the radius $r$
of the sphere is small enough. The indices in the examples are $-1,-3,1,1$ because $S^(x)$ has
Euler characteristic $2$ (two intervals),$4$ (four intervals),$0$ (empty set),$0$ (circle). 
}
\label{2dillustration}
\end{figure}

\begin{figure}
\scalebox{0.14}{\includegraphics{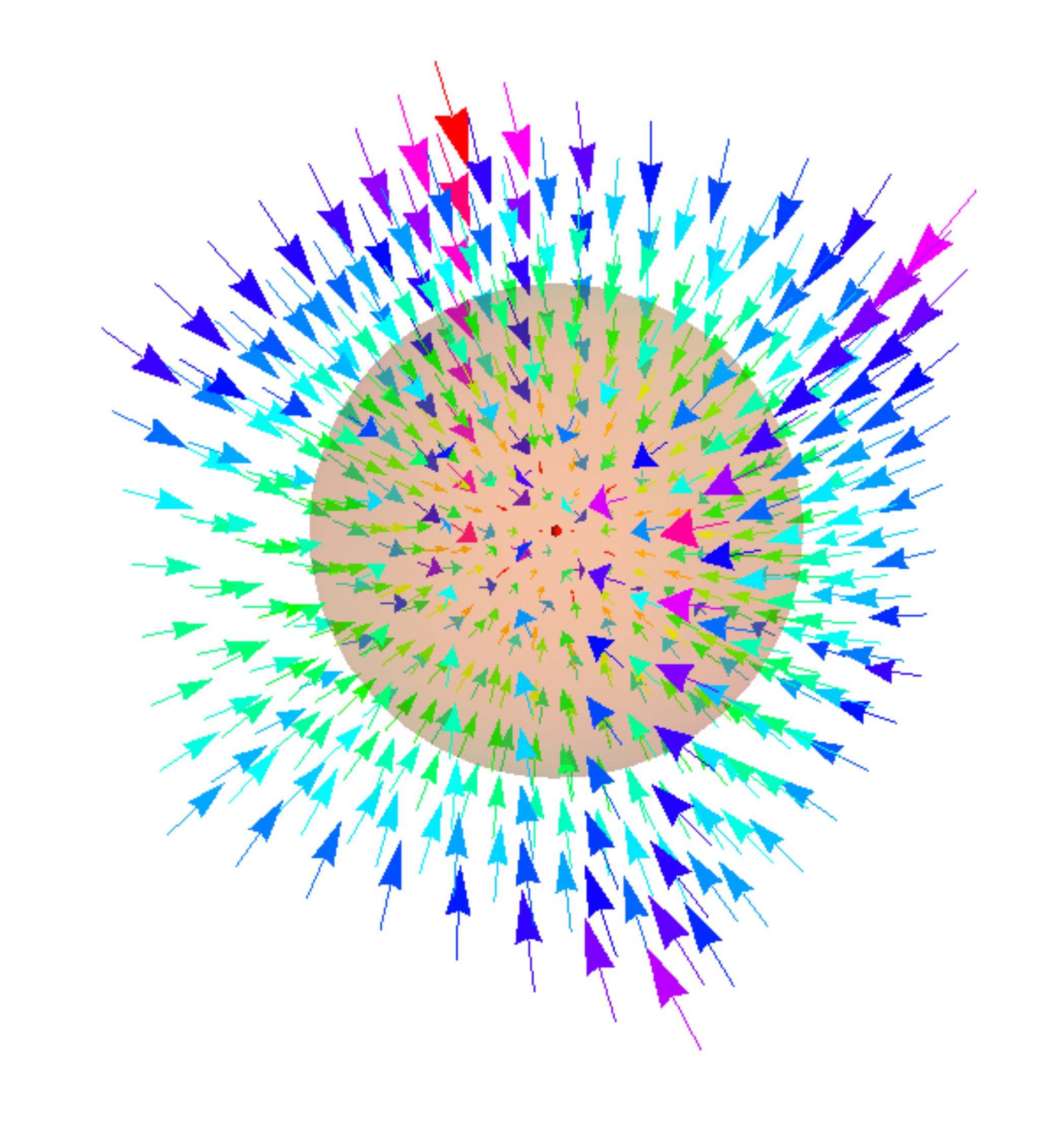}}
\scalebox{0.14}{\includegraphics{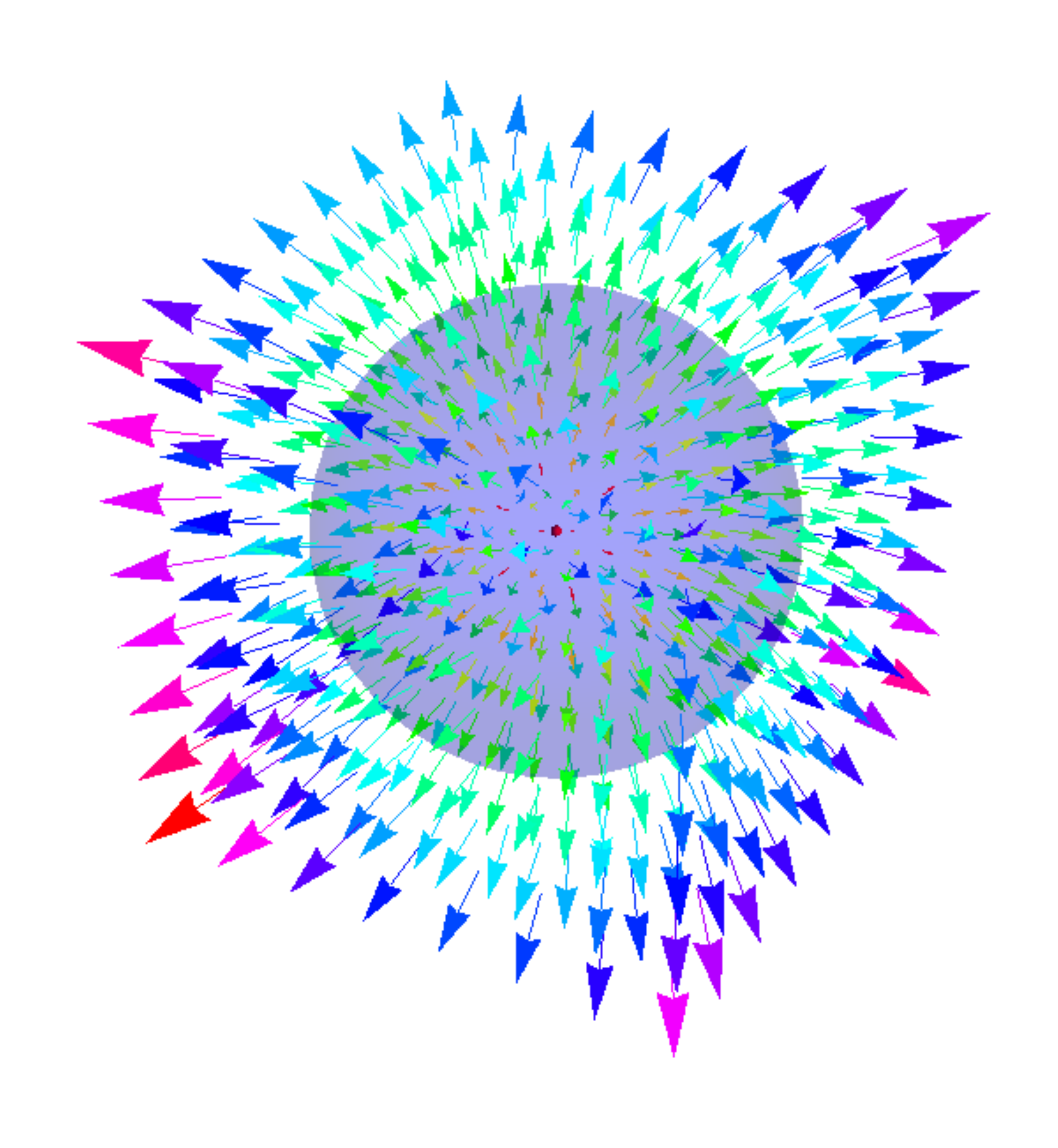}}
\scalebox{0.14}{\includegraphics{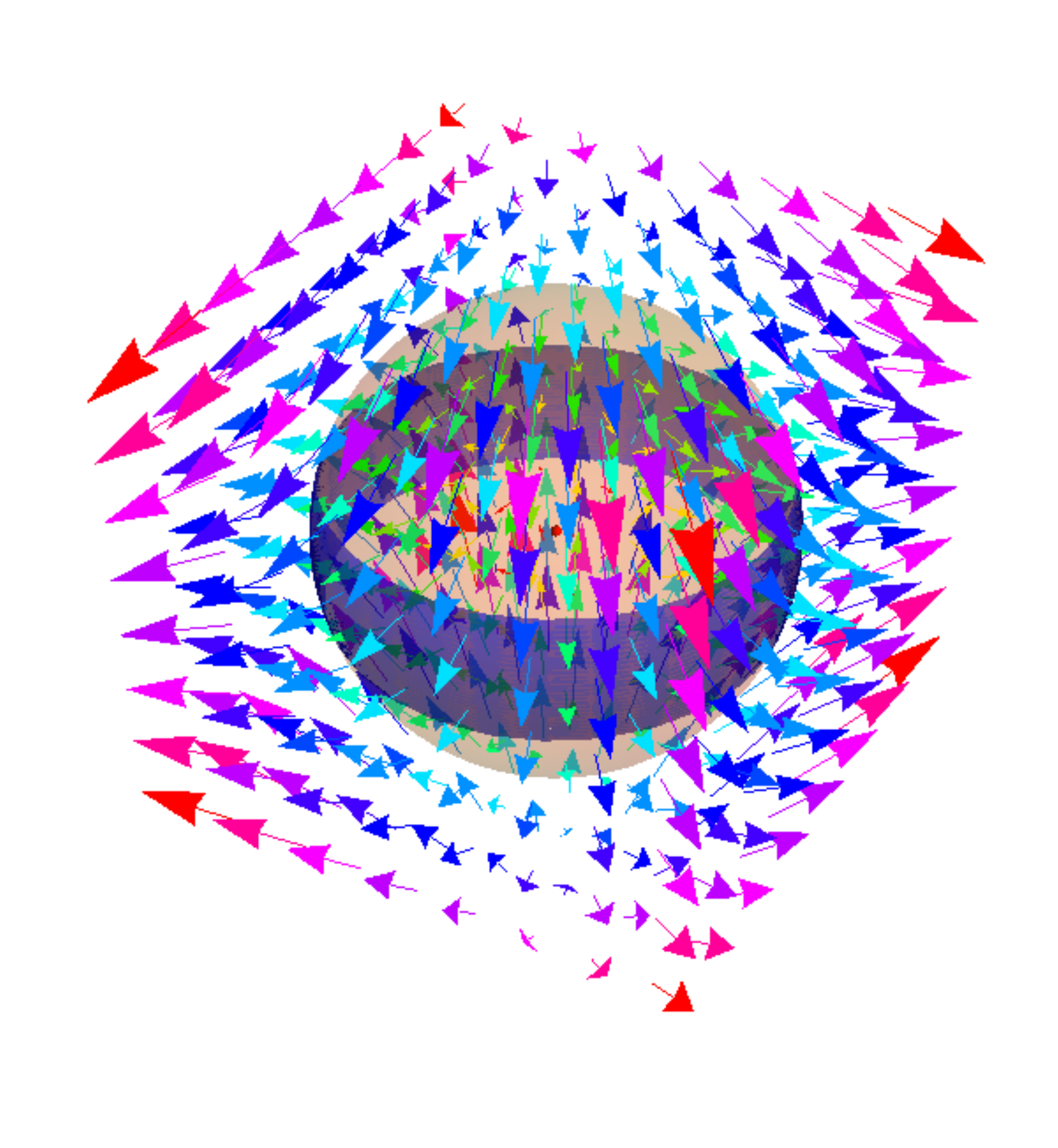}}
\scalebox{0.14}{\includegraphics{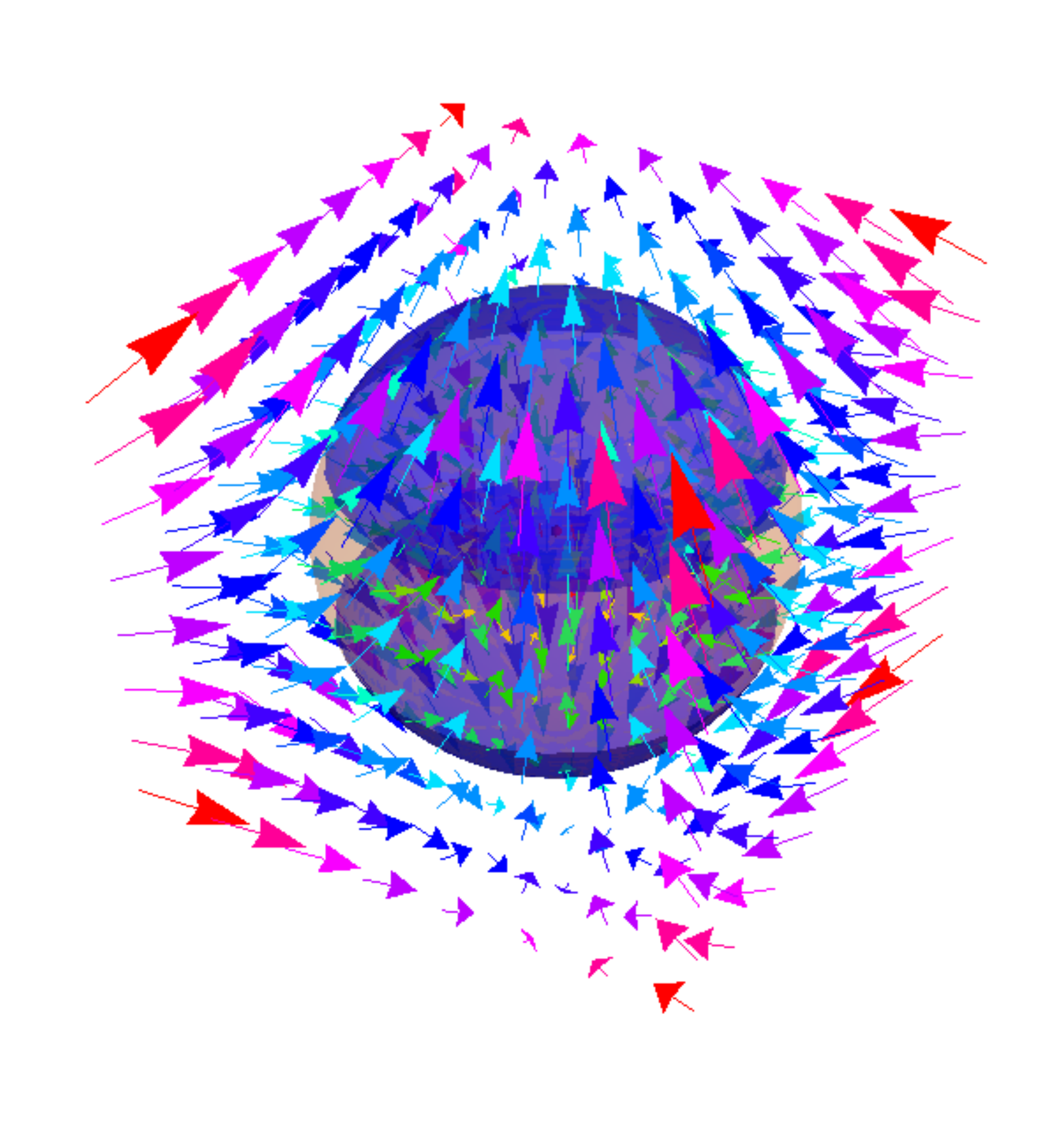}}
\caption{
The exit set $S^-(x) \subset S_r(x)$ for vector fields in space near
an equilibrium point. The index of the vector field is $1-\chi(S^-(x))$ if the radius $r$
of the sphere is small enough. The indices in the examples are $1,-1,1,-1$ because $S^-(x)$ has
Euler characteristic $0$ (empty set),$2$ (sphere),$0$ (cylinder part),$2$ (2 discs).
}
\label{2dillustration}
\end{figure}

\section{Discrete Poincar\'e-Hopf}

\begin{defn}
The Euler characteristic of a graph is defined as 
$\sum_{n=0}^{\infty} (-1)^n v_n$, where $v_n$ is the number of 
$K_{n+1}$ subgraphs of $G$.
\end{defn}

{\bf Remarks}. \\
1. Graphs $K_n$ (or rather the vertex sets alone) are also called cliques in graph theory \cite{handbookgraph,bollbas1,BM}. 
1. In two dimensions, the {\bf Euler characteristic} $\chi(G)$ is $\chi(G) = v-e+f$,
where $v=|V|$ is the number of vertices, $e=|E|$ is the number of edges and 
$f=|F|$ is the number of {\bf faces} in $G$, triangular subgraphs of $G$.  \\
2. A vertex $p$ of a two-dimensional graph is called an {\bf interior point} if $S_1(p)$ is a 
cyclic graph. Otherwise, if $S_1(p)$ is an interval, $p$ is called a {\bf boundary point}. 
The Euler characteristic of a two dimensional graph without boundary points is $b_0-b_1+b_2$
where $b_0=b_2$ agrees the number of components.
If $G$ is a connected two dimensional graph without boundary, 
then $b_0=1$ and $\chi(G)=2-2g$, where $g$ is the genus, the number of holes.
We see here a class of graphs for which the combinatorial Euler characteristic agrees with the
homological definition. \\

Here is the main result of this paper: 

\begin{thm}[Graph theoretical Poincar\'e-Hopf]
Assume $G=(V,E)$ is a simple graph and assume $f$ is a Morse function with 
index function $i=i_f$, then 
$$  \sum_{x \in V} i(x)  = \chi(G) \; . $$
\label{poincarehopf}
\end{thm}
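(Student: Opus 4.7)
The plan is to prove the theorem by induction on $|V|$, using the Morse function $f$ to control how vertices are added. Order the vertices $v_1, v_2, \dots, v_n$ so that $f(v_1) < f(v_2) < \cdots < f(v_n)$ (we may assume $f$ is injective on $V$; the Morse condition gives this on each star and is all we need). Let $G_k$ be the subgraph of $G$ induced by $\{v_1, \dots, v_k\}$. Then $G_1$ is a single vertex with $\chi(G_1)=1$ and $S^-(v_1)=\emptyset$, so $i(v_1)=1$ and the base case holds. The inductive step reduces to showing
\[
\chi(G_{k+1}) - \chi(G_k) \;=\; i(v_{k+1}) \;=\; 1-\chi(S^-(v_{k+1})).
\]
Crucially, because $v_{k+1}$ has the largest $f$-value among the vertices of $G_{k+1}$, the exit set $S^-(v_{k+1})$ computed in $G$ coincides with the full neighborhood graph of $v_{k+1}$ inside $G_k$, and equals $S^-(v_{k+1})$ computed in $G_{k+1}$. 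Likewise, for any $u \in \{v_1,\dots,v_k\}$, the set $S^-(u)$ is the same whether computed in $G$ or in $G_k$, because $v_{k+1}$ would lie in $S^+(u)$ in either case. So the induction hypothesis applied to $G_k$ with the restriction of $f$ gives exactly $\chi(G_k) = \sum_{j\le k} i(v_j)$, and adding the displayed identity over $k$ yields the theorem.

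The content therefore lies in the following clique-counting lemma, which I would prove separately: if $H$ is any simple graph, $v$ a vertex, and $N$ the subgraph of $H$ induced by the neighbors of $v$, then
\[
\chi(H) \;=\; \chi(H \setminus v) \;+\; 1 - \chi(N).
\]
Let $v_n(\cdot)$ denote the number of $K_{n+1}$-subgraphs. A clique $K_{n+1}$ in $H$ either lies in $H\setminus v$, or contains $v$ in which case its remaining $n$ vertices form a $K_n$ inside $N$. This gives $v_n(H) = v_n(H\setminus v) + v_{n-1}(N)$ for $n\ge 1$ and $v_0(H) = v_0(H\setminus v)+1$. Substituting into $\chi(H) = \sum_{n\ge 0}(-1)^n v_n(H)$ and re-indexing the clique contribution from $N$ flips its sign, producing the desired formula.

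Applying this lemma with $H = G_{k+1}$ and $v = v_{k+1}$ (noting $N$ is precisely $S^-(v_{k+1})$ by the $f$-ordering) delivers the inductive step immediately, and the induction closes. I do not see any genuine obstacle: the only subtlety is making sure the exit set $S^-(v)$ used to define $i(v)$ really matches the neighborhood graph appearing in the clique-counting lemma, which is exactly guaranteed by processing vertices in order of increasing $f$. The Morse assumption beyond injectivity on $V$ plays no role in this argument; it becomes relevant only if one later wants to restrict $f$ to unit spheres, as the author foreshadows.
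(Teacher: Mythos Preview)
Your argument is correct and is genuinely different from the paper's proof. The paper proceeds globally: it first shows (Lemma~\ref{morselemma}) that the index sum is independent of the Morse function, then symmetrizes by adding the identities for $f$ and $-f$ to obtain $2\chi'(G) = 2v_0 - \sum_{v}[\chi(S^+(v))+\chi(S^-(v))]$, and finally evaluates this using the transfer equations $\sum_v V_k(v)=(k+2)v_{k+1}$ and the intermediate equations $\sum_v W_k(v)=k\,v_{k+1}$. Your route is a local, inductive one: you peel off the vertex of largest $f$-value and reduce everything to the deletion identity $\chi(H)=\chi(H\setminus v)+1-\chi(N)$, which you prove by the obvious clique bijection. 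What your approach buys is economy: no independence lemma, no symmetrization, no auxiliary summation identities, and the Morse hypothesis is used only in the minimal form that adjacent vertices have distinct $f$-values. What the paper's approach buys is a closer structural parallel to its companion Gauss--Bonnet argument (the transfer equations appear in both), and it isolates the invariance of the index sum under change of $f$ as a result of independent interest. One small remark: your parenthetical about assuming $f$ injective on all of $V$ is harmless but unnecessary; any total order refining the partial order given by $f$ works, since $S^-$ depends only on comparisons between adjacent vertices, where the Morse condition already forces strict inequalities.
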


As in the continuum, we use the following lemma:

\begin{lemma}
The index sum is independent of the Morse function.
\label{morselemma}
\end{lemma}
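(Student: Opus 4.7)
The plan is to deform $f$ into $g$ through Morse functions and check invariance of the index sum step by step. The first reduction is that $i_f(v) = 1 - \chi(S^-(v))$ depends only on the orientations of the edges of $G$ incident to $v$ (i.e.\ on which neighbors lie below $v$ in the $f$-ordering), and these orientations are stable under small perturbations. Hence I may assume throughout that both $f$ and $g$ are \emph{globally} injective on $V$, not merely injective on each unit ball; globally injective functions are automatically Morse.

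Given injective $f$ and $g$, the two induced total orderings of $V$ are related by a permutation, and any permutation is a composition of adjacent transpositions. Thus there is a sequence $f = f_0, f_1, \dots, f_N = g$ of injective Morse functions in which each $f_{k+1}$ is obtained from $f_k$ by interchanging the values at two vertices $v, w$ whose values are consecutive in the $f_k$-ordering. It therefore suffices to show that the index sum is preserved across one such swap.

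Fix the swap with $f_k(v) < f_k(w)$ and no vertex lying strictly between them in value. For any $u \notin \{v, w\}$ and any neighbor $x$ of $u$: either $x \notin \{v, w\}$, in which case nothing changes, or $x \in \{v, w\}$, in which case the no-intermediate-value hypothesis forces $f_k(u)$ to lie below both or above both of $f_k(v), f_k(w)$. In either case, $S^-(u)$ is unchanged as an induced subgraph of $G$, so $i(u)$ is unchanged. If $v \not\sim w$, the same argument applies at $u = v$ and $u = w$ and the sum is trivially preserved. If $v \sim w$, then writing $A = S^-_{f_k}(v)$, $B = S^-_{f_{k+1}}(w)$, and $C = A \cap B$ (the common neighbors of $v$ and $w$ lying below both), one verifies the four induced-subgraph identities
\begin{align*}
S^-_{f_k}(v) &= A, & S^-_{f_{k+1}}(v) &= A \cup \{w\}, \\
S^-_{f_k}(w) &= B \cup \{v\}, & S^-_{f_{k+1}}(w) &= B,
\end{align*}
and that the set of neighbors of $w$ in $A$, as well as the set of neighbors of $v$ in $B$, both coincide with $C$.

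The computation then closes via the elementary cone identity $\chi(H \cup \{x\}) = \chi(H) + 1 - \chi(H[T])$, valid whenever a new vertex $x$ is attached to a subset $T \subseteq V(H)$, since the new cliques are $\{x\}$ together with $\{x\} \cup K$ for each clique $K$ in $H[T]$, contributing $1 - \chi(H[T])$ in alternating count. Applying this to $(A, C)$ and to $(B, C)$ gives
$$\chi(S^-_{f_k}(v)) + \chi(S^-_{f_k}(w)) = \chi(A) + \chi(B) + 1 - \chi(G[C]) = \chi(S^-_{f_{k+1}}(v)) + \chi(S^-_{f_{k+1}}(w)),$$
so $i(v) + i(w)$ is preserved and the lemma follows. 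The main obstacle is the bookkeeping in the third paragraph: verifying that the two cone-attachment subsets really do coincide as the single set $C$, which is precisely where the adjacent-transposition choice (no third vertex value between $f(v)$ and $f(w)$) becomes indispensable.
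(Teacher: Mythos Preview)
Your proof is correct and follows essentially the same strategy as the paper: interpolate between two Morse functions through elementary moves (you use adjacent transpositions of a global ordering; the paper varies $f$ continuously at a single vertex), and at each move show that only $i(v)+i(w)$ can change and that $\chi(S^-(v))+\chi(S^-(w))$ is preserved. Your cone identity $\chi(H\cup\{x\})=\chi(H)+1-\chi(H[T])$ is a clean repackaging of the paper's level-by-level clique-counting, and your explicit reduction to globally injective functions together with the case split $v\sim w$ versus $v\not\sim w$ makes the bookkeeping more transparent than in the original argument.
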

\begin{proof}
Let $M$ denote the set of all Morse functions. 
The index does not change on $M$ unless we pass through a point $f$ in the complement
$$  R = \bigcup_{i, j \in B(i)} \{f \; | \;  f(i)=f(j) \; \} \;  $$
in $M$. We have therefore to understand a single transition, where
we deform the value of the function $f$ at a single fixed vertex $v$
and change the value of the function $f(v)$ such that a single neighboring point $w \in S(v)$,
the value $f(w)-f(v)$ changes sign during the deformation. (The local injectivity can also fail 
if two points in $S(v)$ start to become equal but this does not change $S(v)$. We can therefore
assume $v,w$ are connected.)

Assume the value of $f$ at $v$ 
has been positive initially and gets negative. Now $S^-(v)$ has gained a point $w$ and 
$S^-(w)$ has lost a point.  \\
To show that $\chi(S^-(v)) + \chi(S^-(w))$ stays constant, we check this on
each individual simplex level: 
Let $V_k^+(v)$ denote the number $K_k$ subgraphs of $S(v)$ which connect points
within $S(v)^+$ and let $V_k^-(v)$ connect points which connect vertices within 
$S(v)^-$. We have by definition $i(v) = 1- \sum_k (-1)^k V_k^-(v)$. To prove the lemma,
we have to show that 
$$ V_k^-(v) + V_k^-(w)  $$
stays constant under the deformation. Let $W_k(v)$ denote the number of $K_{k+1}$ subgraphs of 
$S(v)$ which contain $w$. Similarly, let $W_k(w)$ the number of $K_{k+1}$ subgraphs of 
$S(w)$ which do not contain $v$ but are subgraphs of $S^-(w)$ with $v$. 
The sum of $K_k$ graphs of $S^-(v)$ changes by $W_k(w)-W_k(v)$.
When summing this over all vertex pairs $v,w$, we get zero. 
\end{proof}

{\bf Remark. } Lets illustrate the last argument for small $k$: \\
(i) The sum $V_0^+ + V_0^-$ does not change during the deformation 
because one is added on one side and one is lost on the other side. \\
(ii) To the sum of edges $V_1^+ + V_1^-$: let $W_1(v)$ be the number of edges $K_2$
which connect from $S(v)$ to $w$ and let $W_1(w)$ be the number of edges
which connect from $S(w)$ to $v$. The edge sum changes by $W_1(v)-W_1(w)$. 
Take a point $u \in S^-(v) \cap S^-(w)$ which is attached to both $v$ and $w$. 
By definition $f(u)<f(v)$ and $f(u)<f(w)$. 
Initially $(u,w)$ was in $S^-(v)$, after the change it disappeared. 
However the connection $(v,u)$ which was initially not in $S^-(w)$ 
belongs now to $S^-(w)$ afterwards. We see that the sum $W_1(v)$ stays constant. \\
(iii) The sum of triangles $V_2^+ + V_2^-$ changes by $W_2(v)-W_2(w)$
and this is zero after summing over all $v$ because every triangle which was initially in the
graph $S^-(v)$ and loses this property belongs afterwards to $S^-(w)$. 

\begin{lemma}[Transfer equations]
\begin{equation}
  \sum_{v \in V} V_k(v)  = (k+2) v_{k+1} 
  \label{transferequation}
\end{equation}
\end{lemma}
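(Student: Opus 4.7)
The plan is to prove \eqref{transferequation} by a direct double counting argument on the incidence between vertices of $G$ and the cliques of $G$ that contain them. I read the notation in the convention forced by the right-hand side: $V_k(v)$ denotes the number of $K_{k+1}$-subgraphs of the unit sphere $S(v)$, while $v_{k+1}$ denotes the number of $K_{k+2}$-subgraphs of $G$. The heart of the argument will be a tautological bijection between $(k+1)$-cliques contained in $S(v)$ and $(k+2)$-cliques of $G$ that pass through $v$.

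First I would establish that $V_k(v)$ equals the number of $K_{k+2}$-subgraphs of $G$ that contain $v$. Given a $K_{k+1}$-subgraph $K$ of $S(v)$, its $k+1$ vertices are, by the very definition of $S(v)$, all neighbors of $v$ in $G$ and pairwise adjacent in $G$; adjoining $v$ therefore produces a $K_{k+2}$-subgraph of $G$ containing $v$. Conversely, any $K_{k+2}$-subgraph $K'$ of $G$ containing $v$ has its remaining $k+1$ vertices pairwise adjacent and all adjacent to $v$, so $K'\setminus\{v\}$ is a $K_{k+1}$-subgraph of $S(v)$. These two maps are inverse to each other, giving the desired bijection.

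Summing the resulting identity over $v\in V$ and exchanging the order of summation counts each $K_{k+2}$-subgraph $K'$ of $G$ once for every one of its $k+2$ vertices, so that $\sum_{v\in V} V_k(v) = (k+2)\,v_{k+1}$, which is exactly \eqref{transferequation}. There is no real obstacle in this argument; the only care needed is to match the indexing convention, and the argument rests on the defining property of the unit sphere, namely that the vertex set of $S(v)$ is exactly the set of $G$-neighbors of $v$ and that two such neighbors are adjacent in $S(v)$ if and only if they are adjacent in $G$.
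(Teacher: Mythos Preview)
Your proof is correct and follows essentially the same double-counting argument as the paper: each $K_{k+2}$-subgraph of $G$ contributes exactly one $K_{k+1}$-subgraph to $S(v)$ for each of its $k+2$ vertices $v$, and conversely every $K_{k+1}$ in $S(v)$ extends uniquely to a $K_{k+2}$ through $v$. Your write-up makes the bijection explicit and handles the indexing conventions more carefully than the paper's terse sketch, but the underlying idea is identical.
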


\begin{proof}
See \cite{cherngaussbonnet}. 
For $k=0$ this means $e=v_1=\sum_{v \in V} V_0(v) = \sum_{v \in V} {\rm deg}(v)$ because
every of the two vertices of each edge counts towards the sum of the degree sum. 
For $k=1$ this means that the number $f=v_2$ of triangles is three times the sum of
the number of edges: every triangle has $3$ edges to distribute
to the sum. In general, there are $v_k$ subgraphs $K_{k+1}$ with $k+1$ subgraphs $K_k$
to distribute among the vertex sum. 
\end{proof}

Denote by $W_i(v)$ the number of $K_{k+1}$ simplices in $S(v)$ 
which contain vertices both in $S^-(v)$ and $S^+(v)$. The following lemma is key for
the proof of the discrete Poincar\'e-Hopf theorem \ref{poincarehopf} similarly as the previous lemma \ref{transferequation}
was the core of the proof of Gauss-Bonnet-Chern \cite{cherngaussbonnet}.

\begin{lemma}[Intermediate equations]
\begin{equation}
\sum_{v \in V} W_k(v) = k v_{k+1} 
\label{intermediateequation}
\end{equation}
\end{lemma}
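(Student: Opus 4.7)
The plan is to prove this by a double counting argument over the $K_{k+2}$ subgraphs of $G$, since the right hand side equals $k$ times the total count $v_{k+1}$ of such cliques.

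First I would observe the standard correspondence: the $(k+1)$-cliques in the unit sphere $S(v)$ are in bijection with the $(k+2)$-cliques of $G$ that contain $v$, by adjoining $v$. So each contribution to $W_k(v)$ corresponds to a $K_{k+2}$ subgraph $T$ of $G$ with a distinguished vertex $v \in T$. Summing $\sum_v W_k(v)$ therefore amounts to counting pairs $(T,v)$ where $T$ is a $K_{k+2}$ of $G$, $v \in T$, and the remaining $k+1$ vertices $T\setminus\{v\}$ (which form a $K_{k+1}$ inside $S(v)$) contain at least one vertex in $S^-(v)$ and at least one in $S^+(v)$.

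Next, for any such $T$ the Morse assumption forces $f$ to take distinct values on $T$: since $T$ is a clique, fixing any vertex $w \in T$, all the other vertices of $T$ lie in $S(w) \subset B(w)$, and $f$ is injective on $B(w)$. Therefore I can order the vertices of $T$ by $f$-value as $u_1, u_2, \dots, u_{k+2}$ with $f(u_1) < f(u_2) < \cdots < f(u_{k+2})$. For a given $v = u_j$, the other $k+1$ vertices split into $j-1$ below $v$ (contributing to $S^-(v)$) and $k+2-j$ above $v$ (contributing to $S^+(v)$). The mixed sign condition defining $W_k(v)$ holds iff $j-1 \geq 1$ and $k+2-j \geq 1$, i.e.\ iff $v$ is neither the $f$-minimum nor the $f$-maximum of $T$.

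Thus each $K_{k+2}$ subgraph $T$ contributes exactly $(k+2)-2 = k$ pairs to the sum, one for each of the $k$ intermediate vertices of $T$. Since there are $v_{k+1}$ such subgraphs,
\begin{equation*}
\sum_{v \in V} W_k(v) \;=\; k \cdot v_{k+1},
\end{equation*}
which is the desired identity. The argument has essentially no obstacle; the only subtle point is verifying the distinct-values claim on a clique, which is immediate from the Morse hypothesis applied to any single ball containing the clique. Note the formula also covers the degenerate case $k=0$: a single vertex cannot simultaneously lie in $S^-(v)$ and $S^+(v)$, so $W_0(v)=0$, matching $0\cdot v_1$.
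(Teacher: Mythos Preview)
Your proof is correct and follows essentially the same double-counting argument as the paper: both identify each contribution to $\sum_v W_k(v)$ with a pair $(T,v)$ where $T$ is a $K_{k+2}$ in $G$ and $v$ is a non-extremal vertex of $T$ under $f$, giving exactly $k$ such vertices per clique. Your write-up is in fact more careful than the paper's, in particular in explicitly invoking the Morse hypothesis to justify that $f$ takes distinct values on any clique.
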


\begin{proof}
(i)  Clearly $\sum_{v \in V} W_0(v) = 0$ because $W_0(v)=0$. \\
(ii) The formula $w_1=\sum_v W_1(v) = v_2$ follows in every of the $v_2$ triangles $(a,v,c)$ in $G$
we have $f(a)<f(v)<f(c)$. This leads to exactly one vertex $(a,c)$ in $E$ which connects $S^-(b)$ with $S^+(b)$. \\
(iii) The equation $\sum_v W_2(v)=2v_3$ follows from the fact that every tetrahedron in $G$
there are two vertices which belong to triangles $(a,v,b)$ where $f(a)-f(v)$ and $f(b)-f(v)$ have
different signs. \\
(iv) In general, if we look at all the $v_{n+1}$ graphs $K_{n+2}$ in $G$, there are $n$ vertices $v$
which are not extremal and which therefore belong to $K_{n+1}$ graphs for which $v$ connects to $S^-(v)$
and $S^+(v)$. 
\end{proof}

Now to the proof of the theorem:

\begin{proof}
Lets call the sum of the indices $\chi'(G)$. Because by lemma~\ref{morselemma},
$f$ and $-f$ have the same sum, we can add up the two equations
\begin{eqnarray*}
    \sum_{v \in V} 1- \chi(S^+) &=& \chi'(G) \\
    \sum_{v \in V} 1- \chi(S^-) &=& \chi'(G) \\
\end{eqnarray*}
and prove
$$ 2 v_0 - \sum_{v \in V} \chi(S^+(v)) + \chi(S^-(v)) = 2 \chi'(G) \;    $$
instead.  Denote by $V_k^+(v)$ the number of $k+1$-simplices $K_{k+1}$ in $S^+(v)$ and with 
$V_k^-(v)$ the number of $k+1$-simplices $K_{k+1}$ in $S^-(v)$ and with $W_i$ the number of 
$K_{k+1}$ simplices in $S(v)$ which contain both positive and negative vertices.  We have 
$$   V_k^-(v) + V_k^+(v) = V_k-(v) W_k(v) \; . $$ 
Let $A(v)$ be the subgraph of $S(v)$ generated by edges connecting $S^-(v)$ with $S^+(v)$. Now
$$ \chi(S(v)) = V_0(v)-V_1(v)+V_2(v) + \cdots  \; ,   $$
$$ \chi(S^+(v)) = V_0^+(v)-V_1^+(v)+V_2^+(v) \cdots  \; ,   $$
$$ \chi(S^-(v)) = V_0^-9v) -V_1^-(v)+V_2^-(v) \cdots  \; ,   $$
$$ \chi(A(v)) = W_1(v)-W_2(v)+ \cdots  \; .   $$
Using the transfer equations~(\ref{transferequation})
and the intermediate equations~(\ref{intermediateequation}), we get
\begin{eqnarray*}
   \chi'(G) &=& 2v_0 + \sum_{k=0}^{\infty} (-1)^k \sum_{v \in V} (V_k^-(v) + V_k^+(v)) \\
            &=& 2v_0 + \sum_{k=0}^{\infty} (-1)^k \sum_{v \in V} (V_k(v)   - W_k(v)  ) \\
            &=& 2v_0 + \sum_{k=0}^{\infty} (-1)^k [ (k+2) v_{k+1} - k v_{k+1}] \\
            &=& 2v_0 + \sum_{k=0}^{\infty} (-1)^k 2 v_{k+1} \\
            &=& 2v_0 - 2v_1 + 2 v_2 - \dots  = 2 \chi(G) \; .
\end{eqnarray*}
\end{proof}

{\bf Examples.} \\
1. Given a tetrahedral graph $K_4$. Every Morse function is equivalent here. 
We can assume $f$ to take values in $[1,2,3,4]$. Now, the index 
is $1$ exactly at the minimum and else zero. This argument works for any 
$K_n$, where we can assume $f(k)=k$ and only the minimum produces an index. \\
2. Given a manifold $M$ with Morse function $f$. A triangularization defines a graph $G=(V,E)$. 
For a sufficiently fine triangularization of $M$, there are the same number of classical 
critical points of $f$ than critical points on the graph and the indices agree. 
The graph theoretical Poincar\'e-Hopf theorem implies the classical Poincar\'e Hopf theorem.  \\

{\bf Remark:} \\
Is there a Poincar\'e-Hopf theorem for vector fields? With a vector field on a simple
graph we mean a directed graph with a function $F:E \to R$ such that $F(e^-)=-F(e)$
where $e^-$ is the reversed edge in the directed graph. Denote by $G={\rm curl}(F)$ the 
completly anti-symmetric function on oriented triangles $t=(e_1,e_2,e_3)$ defined by 
$G(t) = F(e_1)-F(e_2)+F(e_3)$. 
A graph is called simply connected if every closed curve in $G$ can be deformed to a point
using a finite set of deformation steps, where one single deformation step changes the 
path on a triangle $K_3$ subgraph of $G$ only. As in the continuum, if
a graph is simply connected and a vector field $F$ has zero curl, then $F$ is a gradient
field: there is a function $f$ on vertices such that $F((x,y)) = f(y)-f(x)$. Such a vector
field is called conservative. It is called nondegenerate, if the potential $f$ is a
Morse function. Obviously, for nondegenerate conservative vector fields, the theorem
applies too. As mentioned in the introduction, if the curl is not zero on some triangle,
then the theorem would need a modification. One could add $1/3$ to the index of each triangle with
nonvanishing curl and get a Poincar\'e-Hopf result for more general vector fields, at least in
two dimensions. 

\section{Graphs of sphere type}

Reeb's theorem states that if a smooth Morse function on a compact $d$ dimensional manifold $M$ 
has only two critical points and both are non-degenerate, then $M$ is homeomorphic to a sphere
\cite{Mil63}. Here is a weak discrete analogue:

\begin{defn}
Given a simple graph $G=(V,E)$. Denote by $m(G)$ the minimal number of critical points 
among all Morse functions on $G$. 
\end{defn}

{\bf Examples.} \\
1. The invariant $m(G)$ is a positive integer for a graph $G=(V,E)$, if the edge set $E$ is nonempty. 
It is $n$ for graphs $P_n$ of order $n$ and size $0$ which do not have any edges. \\
2. For a one dimensional cyclic graph, it is $2$. 
For a triangularization of a $n$-dimensional sphere it is $2$.  \\
3. For a discrete torus, the minimum can be $3$: the min and max produce index $1$
then we have one with index $-2$. 

{\bf Remarks.} \\
1. Here is a discrete approach to Reeb's theorem:
If $m(G)=2$, and $\chi(S(v))=1-(-1)^{d-1}$ then $G$ is graph with Euler characteristic $1-(-1)^d$. 
Proof: one of the points must be a maximum and have index $(-1)^{d-1}$, the other must be a minimum of index $1$. 
The discrete Morse theorem assures that the Euler characteristic is $1+(-1)^{d-1}=1-(-1)^d$.
2) The one dimensional version of the discrete Reeb theorem is trivial because the condition of
having exactly two critical points implies that the graph has only one connectivity component.
Since a general, a one dimensional graph is a union of one dimensional circles, it must be a simple circle. 
If a one-dimensional graph $G$ without boundary has a Morse function $f$ with exactly two
critical points, then $G$ is a cyclic graph of Euler characteristic $0$.  \\

Of course, we would like to get closer to the standard continuum Reeb theorem.
Because we do not have a notion of a "standard sphere" in graph theory (pyramid constructions
show that the unit sphere of a vertex can be an arbitrary graph) we make the following 
inductive definition:

\begin{defn}
A graph $G=(V,E)$ is of {\bf sphere type}, if it is either the empty graph or if $m(G)=2$ 
and every unit sphere $S(v)$ of $G$ is of sphere type. 
\end{defn} 

{\bf Examples.} \\
1. By definition, a graph $G=(V,E)$ with empty edge set $E$ is of sphere type, if it consists of two isolated points.  \\
2. A graph for which every point has dimension $1$ is of sphere type if it is equal to $C_n$ for $n>3$.  \\
3. A graph for which every point has dimension $2$ is of sphere type if it is a geometric graph for which 
every unit sphere is a circular graph. Examples are the octahedron and the icosahedron. 
Every triangularization of the unit sphere works. Every such geometric two dimensional graph defines a compact
two dimensional surface. If the Euler characteristic of this surface is $2$, then the graph is of sphere type. \\
4. The three dimensional 600 cell is of sphere type because every unit sphere is an icosahedron which is
of sphere type and because we can place it into $R^4$ with a function $r: V \to R^4$ such that $f(v) = (0,0,0,1) \cdot r(v)$
is a Morse function which has just two critical points. \\
5. Every graph which is of sphere type is connected if it has positive size. \\
6. Every graph $G$ of sphere type of positive size we know of for which the unit sphere is connected
is simply connected: every closed curve in $G$ can be deformed to a point where a single deformation 
step changes a path only on a triangle $K_3$. 

\section{Illustrations}

We look first at some one dimensional graphs.   \\

1. For a circular graph $C_n$ and a Morse function $f$, we have $i(x)=-1$ if $S^-(x)$ consists of 
two points, which means that $f$ is a local maximum. We have $i(x)=1$ at points where
$S^-(x)$ consists of zero points, which is a local minima. If $S^-(x)$ consists of one
point, then we have a regular point and the index is zero. Poincar\'e-Hopf on a circular
graph tells that the number of local maxima is equal to the number local minima. \\

\begin{figure}
\scalebox{0.22}{\includegraphics{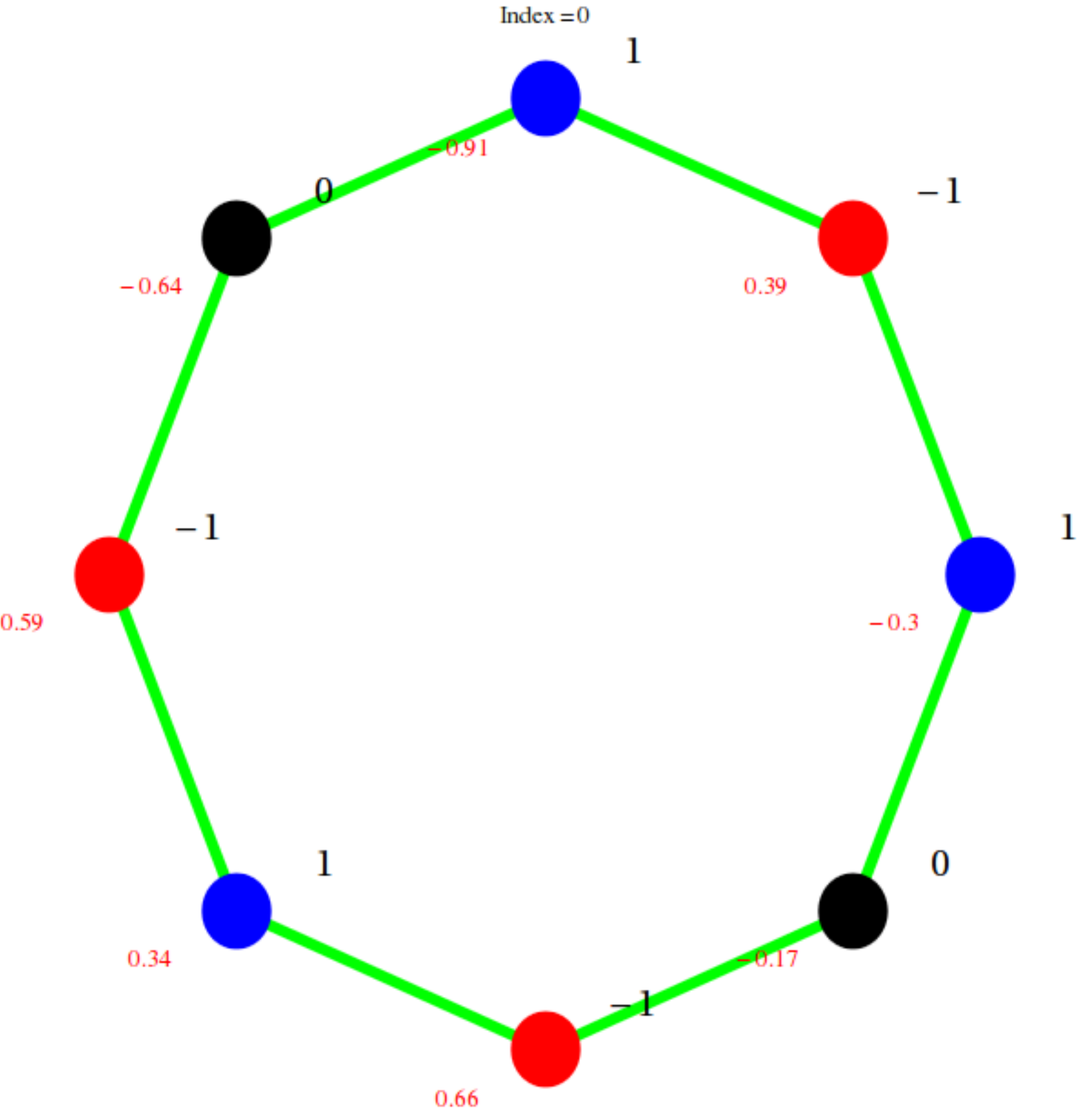}}
\scalebox{0.22}{\includegraphics{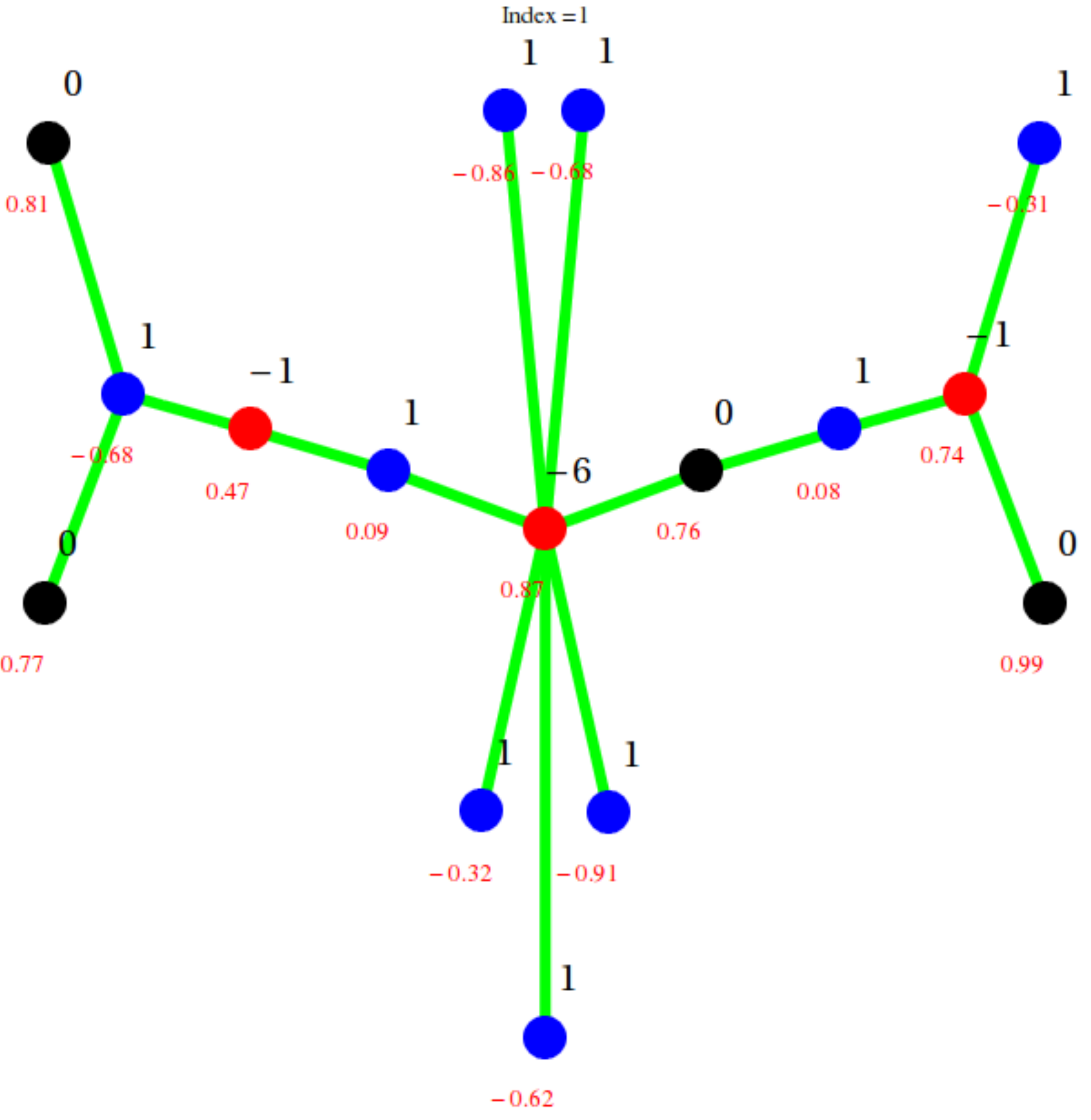}}
\caption{
The left picture shows a cyclic graph. The index at a maximum is $-1$, the index at a minimum is $1$. 
The right picture shows a tree. The index can not become larger than 
$1$ for one dimensional graphs because the Euler characteristic can not become negative for 
zero dimensional graphs. 
}
\label{onedim}
\end{figure}

2. For an interval, a one dimensional graph with two boundary points, the index is $1$
at local maxima. For $f=[1,3,2]$ the index is $i=[0,1,0]$. For $f=[2,1,3]$ the index is $i=[1,-1,1]$.  \\

3. For a tree, $i(x)$ is $1$ minus the number of smaller neighbors. At a minimum, we have
index $1$ at a maximum it is $1$ minus the number of smaller neighbors. \\

4. For an octahedron or icosahedron, we can find Morse functions which have just two critical points. 
Both maxima and minima now have circular graphs as unit spheres. 

\begin{figure}
\scalebox{0.21}{\includegraphics{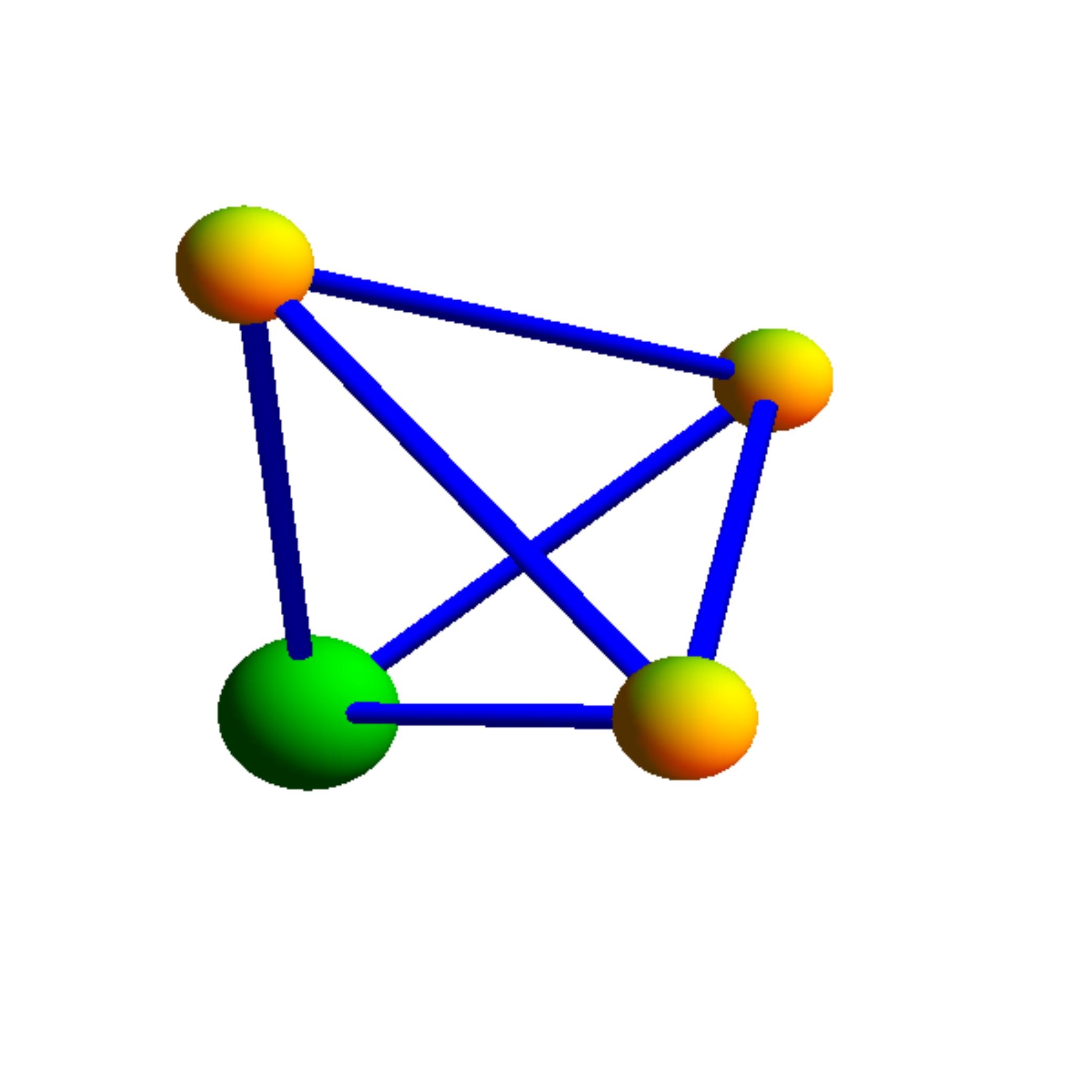}}
\scalebox{0.21}{\includegraphics{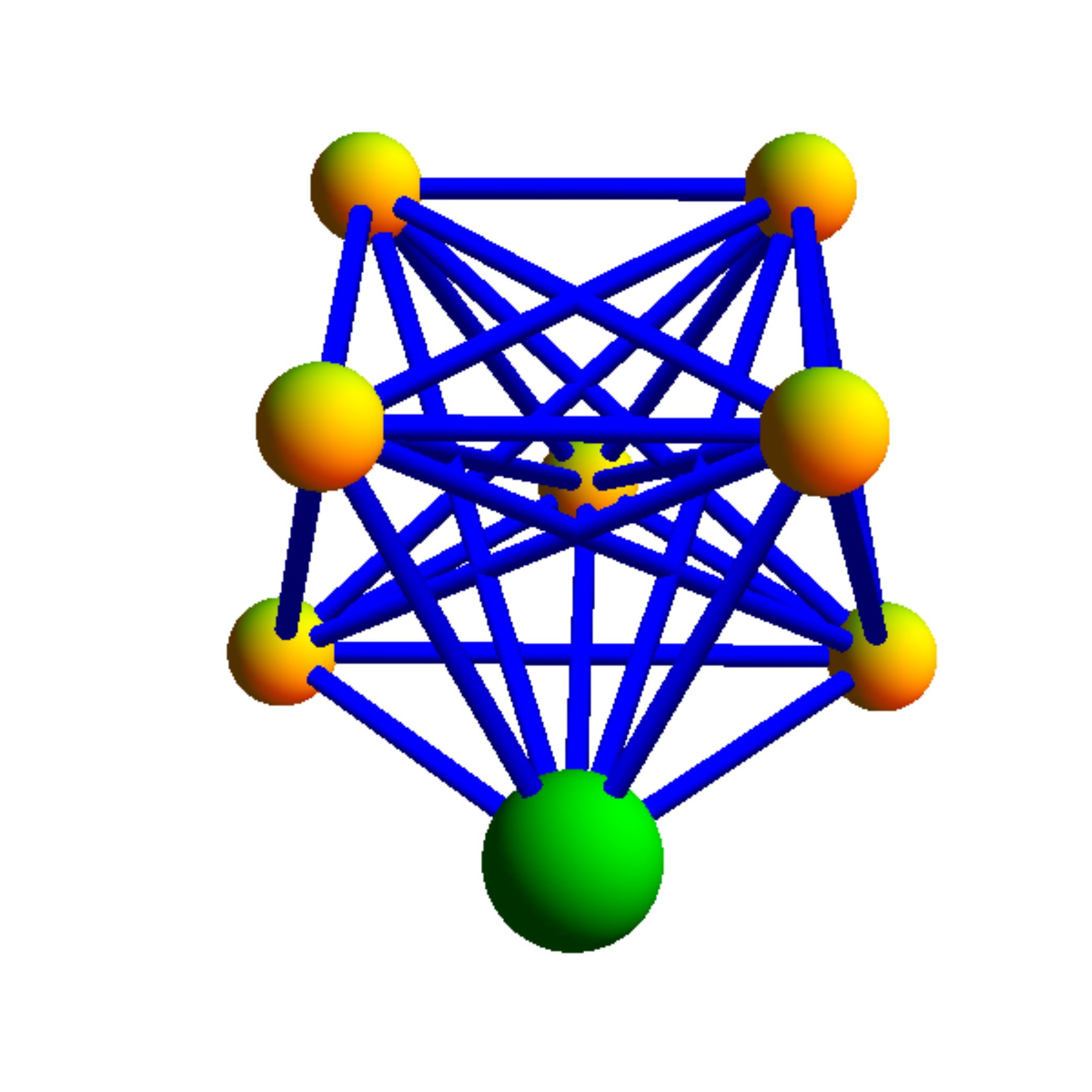}}
\caption{
For the complete graph and any injective function $f$, there is only one
critical point, the maximum, for which $S^-(v)$ is empty so that $i(v)=1$. 
The left picture shows $K_4$, the tetrahedron. The right picture shows 
$K_8$. 
}
\label{complete}
\end{figure}

5. For a three dimensional cross polytope, each unit sphere is an octahedra. At a maximum, 
the unit sphere has Euler characteristic $2$ and the index is $-1$. 
At a minimum, the index is $1$. The sum of the indices is $0$. We know in general \cite{gaussbonnetchern}
that for geometric three dimensional graphs, the Euler characteristic is $0$. 

\begin{figure}
\scalebox{0.24}{\includegraphics{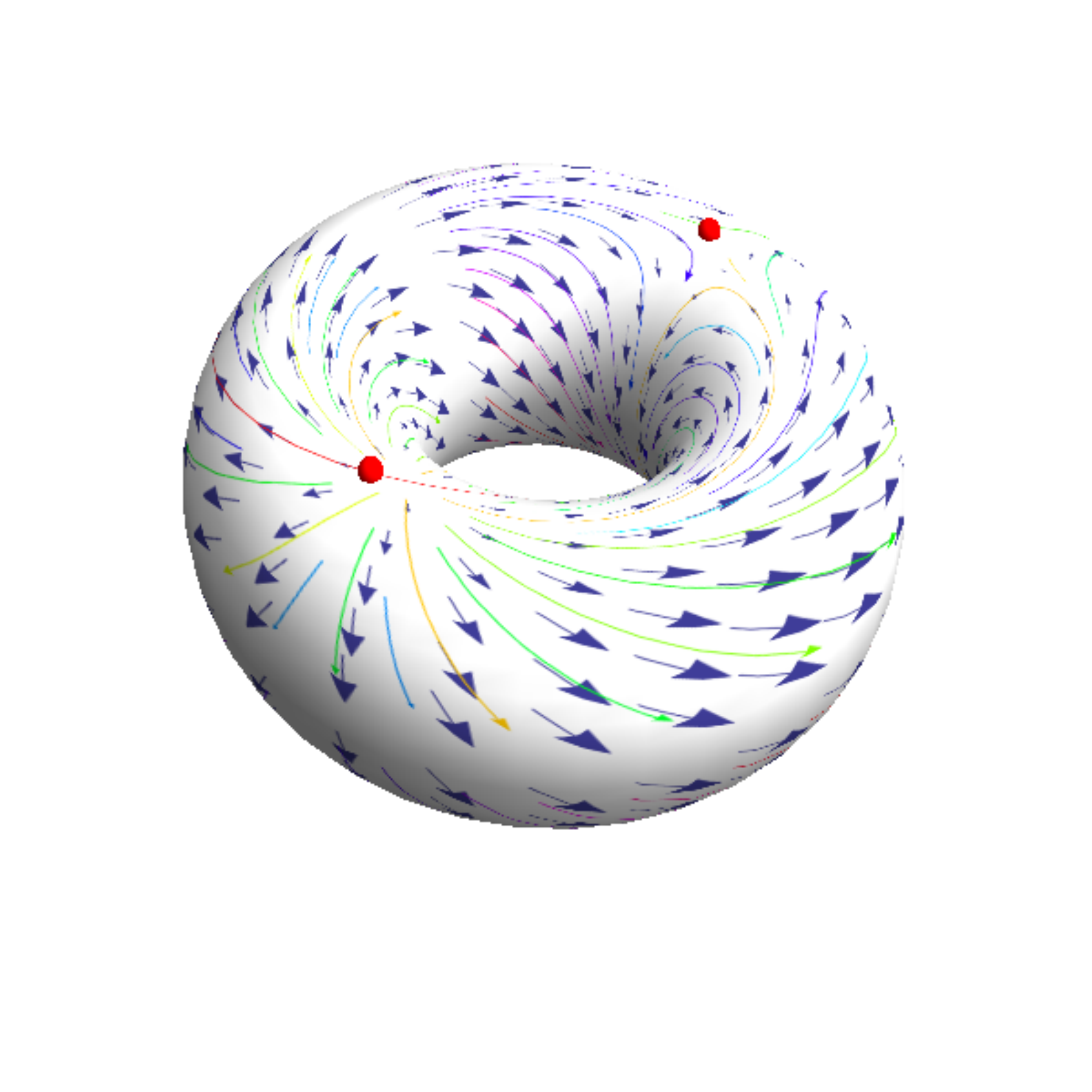}}
\caption{
The figure shows a discrete torus $G=(V,E)$ embedded in space by a map $r: V \to R^3$. 
If all vertices have different height, we can use the 
Morse function $f(v) = (0,0,1) \cdot r(v)$.
There are only 4 critical points. The maxima and minima have index $1$ because the unit sphere
is the circular graph $S(v) = C_6$ and $1-\chi(S^-({\rm max}))=1-\chi(\emptyset)=1$
and $1-\chi(S^-({\rm min})) = 1-\chi(C_6) = 1$. The saddle points have index $-1$ because 
$i\chi(S^-(v))=0$ in those cases. 
}
\label{torus}
\end{figure}

6. For any $d$- dimensional polyhedron $V$ with triangular faces which can be 
realized as a triangularization of a $d$-dimensional sphere and for which the 
unit spheres have the same properties, and so on, we have $\chi(G) = 1 +  (-1)^d$. 
Proof: embed the polyhedron into $R^{d+1}$
in such a way that the $d+1$'th coordinate is injective. 
Then look at $f(x) = x_{d+1}$. This is an injective
function on $V$ and so Morse function on $G$. By induction, the unit sphere 
of a vertex is a $d-1$ dimensional convex polyhedron which is a 
triangularization of $S^{d-1}$. It has Euler characteristic $1-(-1)^{d-1}$. 
The maxima and minima therefore have indices $0$ for odd $d$ and $1$ for even $d$. 

\begin{figure}
\scalebox{0.24}{\includegraphics{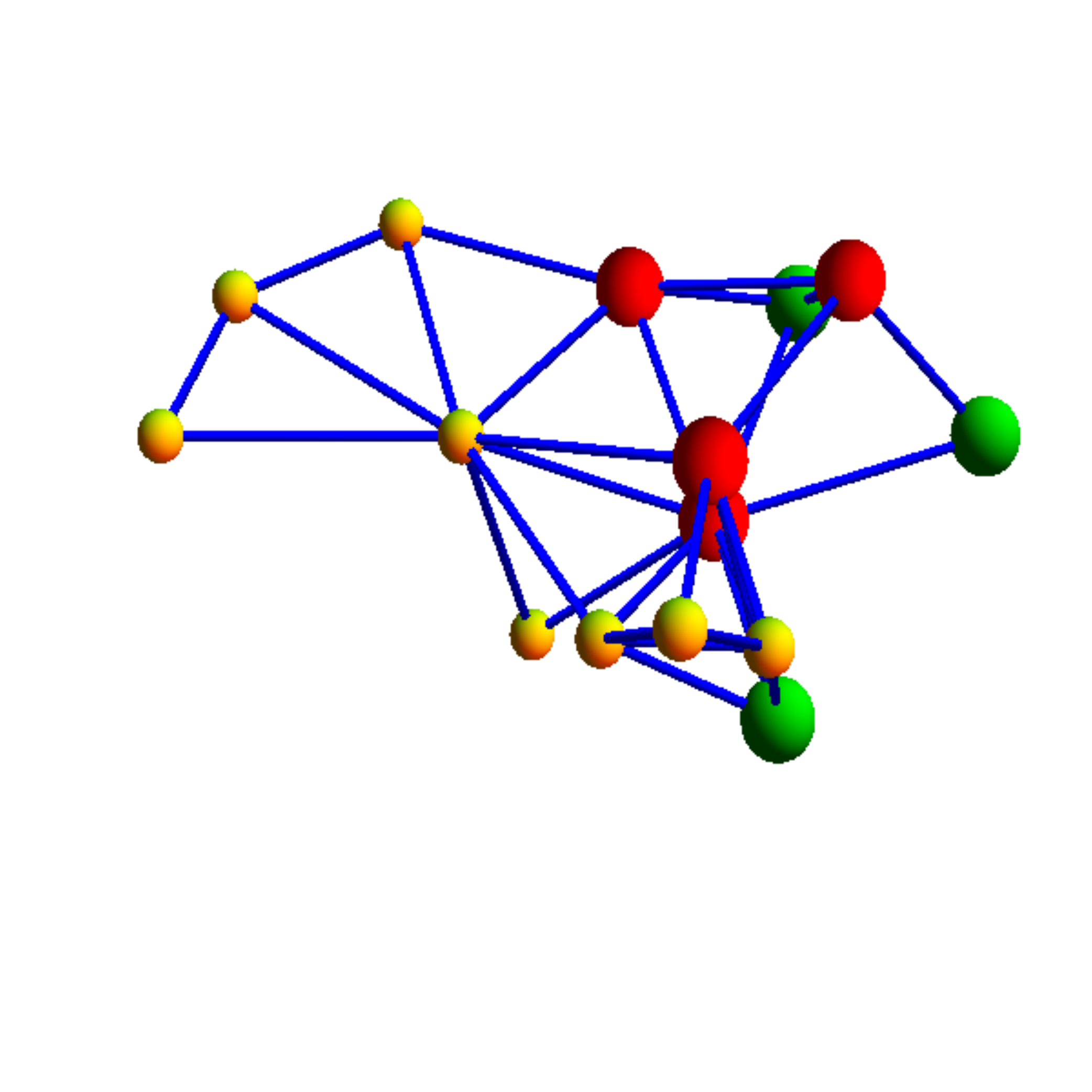}}
\scalebox{0.24}{\includegraphics{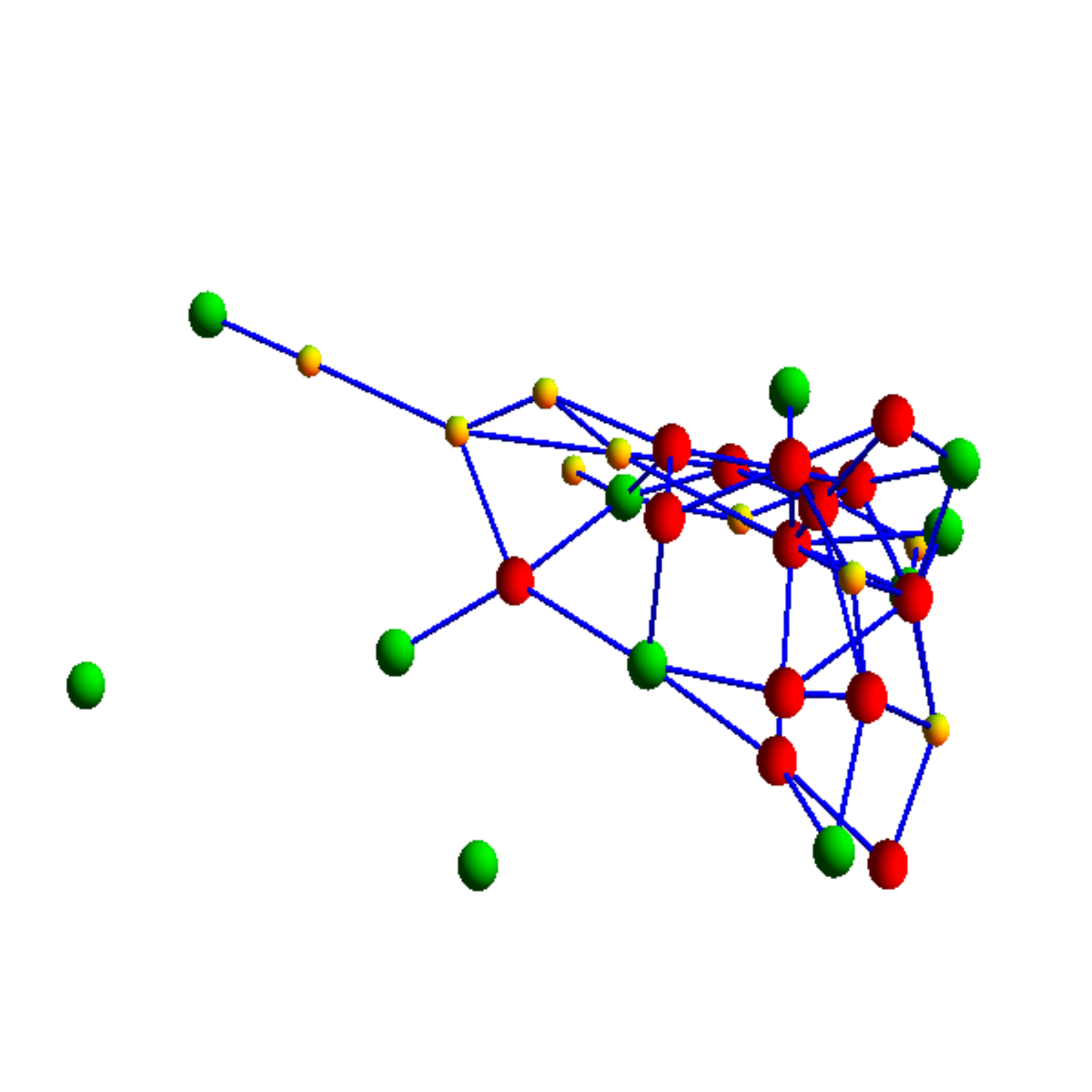}}
\caption{
A random graph with 15 vertices and one with 35 vertices. Also chosen is
a random Morse function $f$. The vertices are larger at critical points, 
points where the indices are nonzero. 
}
\label{complete}
\end{figure}

\begin{figure}
\scalebox{0.21}{\includegraphics{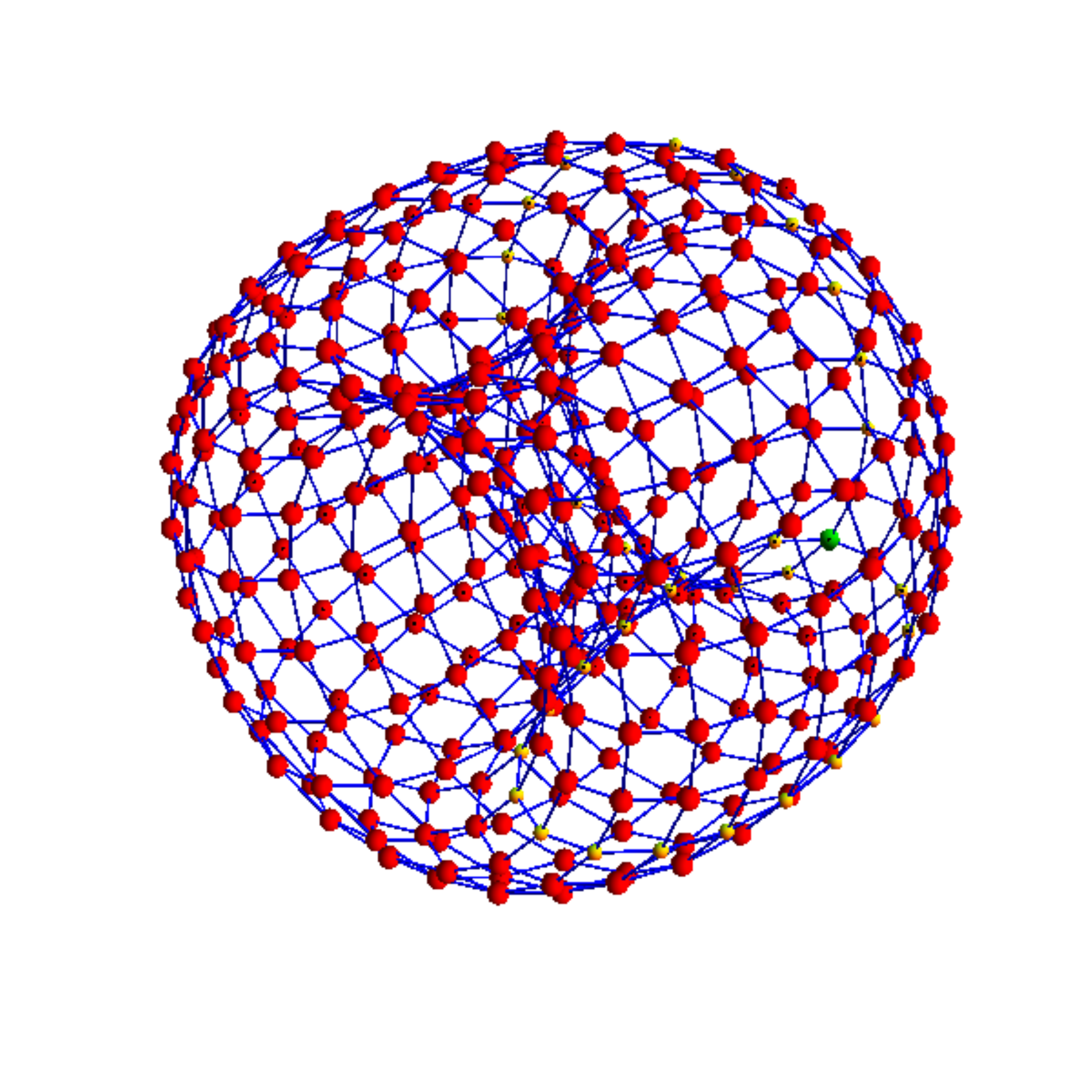}}
\scalebox{0.21}{\includegraphics{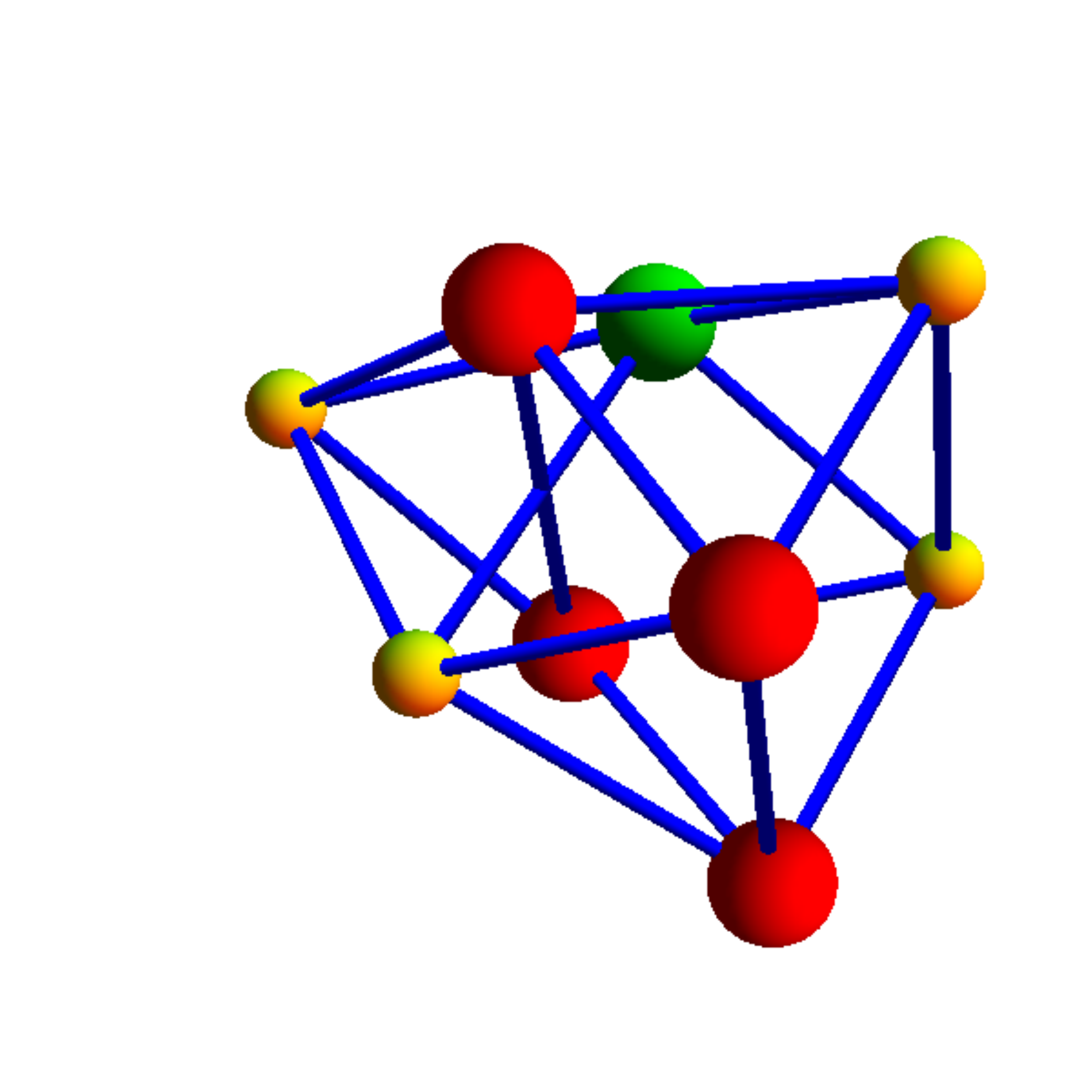}}
\caption{
The left picture shows a product graph $C_n \times C_m$. Such graphs have uniform dimension $1$
except for $n=m>3$ and curvature $-1$ leading to Euler characteristic $-n m$ due to the presence
of $n m$ holes in a torus. The right graph is a graph 
of constant dimension $2$ and where the curvature is constant $-1/3$ leading to Euler 
characteristic $-3$. These two examples illustrate that constant curvature and constant dimension
often imply each other. 
}
\label{complete}
\end{figure}

\section{Relations with the continuum}

The index definition can be used in the continuum too.
Let $M$ be a $d$-dimensional compact Riemannian manifold with Morse function $f$. Denote by
$S_r(p) = \{ q \; | \; d(p,q) = r \; \}$. For any point $p \in M$, define
$$  S_r^-(p) = \{ q \in S_r(p) \; | \; f(q)-f(p)<0 \; \} $$
and $i_{r,f}(p) = 1 - \chi(S_r^-)$.
For any Morse function $f$ on $M$ and critical point $p$, there exists $r_0$ such that for $0<r<r_0$
the index $i_{r,f}(p)$ agrees with the classical index of the gradient vector field $\nabla f$ at $p$.
The reason is that for a critical point of $f$ with
Morse index $k$ - the number of negative eigenvalues of the Hessian - 
then the sphere $S_r^-(p)$ is a $k$ dimensional sphere of
Euler characteristic $1+(-1)^k$. Therefore $i_{r,f}(p) = (-1)^k$.
This works also for non degenerate equilibrium points of vector fields $F$, where the index is either
$\pm 1$ \cite{Mil65}. The index works also for degenerate critical points of gradient fields,
at least in two dimensions. Vector fields in two dimensions with index larger than $1$ are not
gradient fields. \\

\begin{figure}
\scalebox{0.24}{\includegraphics{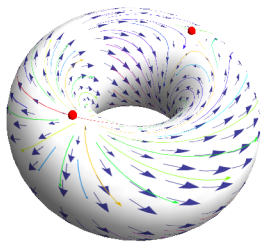}}
\caption{
The classical Poincar\'e Hopf theorem for a vector field on the torus.
There are 4 critical points, two with index $-1$ and two with index $1$.
The sum of the indices is $0$, which is the Euler characteristic of the
torus. 
}
\label{classical}
\end{figure}

We can compute the index of a classical Morse function $f$ on a manifold $M$ 
at a critical point $p$ by taking a sufficiently small $r>0$ and building 
a sufficiently fine triangularization of the sphere $S_r(p)$ and computing
the graph index at $p$ where the function $f$ is just the restriction of $f$ on $V$.
The reason is that by the Morse lemma, the function $f$ has in suitable coordinates the form
$f(x) = -\sum_{i=1}^k x_i^2 + \sum_{i=k+1}^n x_i^2$ near $p$. This means that $S^-(x)$ is homotopic
to a $(k-1)$-dimensional sphere of Euler characteristic $1-(-1)^k$. We have
$1-\chi(S^-(x)) = 1- (1-(-1)^k) = (-1)^k$. \\

We see therefore that the classical Poincar\'e Hopf theorem follows from the graph theoretical version
at least in the smooth case, where we can triangulate a manifold. Additionally, 
given a Morse function $f$ on $M$, we can triangularize in such a way that $f$ remains a Morse function 
on the graph, such that critical points of $f$ are vertices and such that these points are the only 
points with nonzero index in the graph. To see the later, note that near a point $q$ where $f$ has no critical point,
the gradient $\nabla f$ points in one direction. We just have to make sure that no two points are
on the same level curve of $f$. For each vertex $v$ there, the level curve through $v$ is a demarcation
line and  $S^-(v) = S(v) \cap \{x \; | \;  f(x)>f(v) \; \}$ has Euler characteristic $1$, 
as a triangularization of a half sphere always has in any dimension. 

\vspace{12pt}
\bibliographystyle{plain}

\end{document}